\documentclass[12pt]{article}

\usepackage{amssymb, amsmath, amsfonts}
\usepackage{ulem}
\usepackage{graphicx}
\usepackage{setspace} 
\usepackage{rotating}
\usepackage[utf8]{inputenc}
\usepackage[authoryear]{natbib}
\usepackage{array, epsfig, fancyheadings, rotating}
\usepackage{caption}
\usepackage{caption}
\usepackage{lineno}
\usepackage{subcaption}
\usepackage{hyperref}
\usepackage{mathrsfs}
\usepackage{titling}
\usepackage{booktabs}
\usepackage{multirow}
\usepackage{threeparttable}
\usepackage{adjustbox}
\usepackage{xcolor}
\usepackage{amsthm}
\usepackage{lscape}
\usepackage{afterpage}
\usepackage{enumitem}
\usepackage{algorithm}
\usepackage{algpseudocode}
\usepackage{authblk}
\usepackage{geometry}

\hypersetup{
  colorlinks=true,
  linkcolor=blue,
  urlcolor=blue,
  citecolor=blue,
}

\theoremstyle{plain}
\newtheorem{theorem}{Theorem}[section]
\newtheorem{lemma}[theorem]{Lemma}
\newtheorem{corollary}[theorem]{Corollary}
\newtheorem{proposition}[theorem]{Proposition}

\theoremstyle{definition}

\newtheorem{example}{Example}
\newtheorem{remark}{Remark}

\newtheorem{notation}{Notation}

\newcommand{\sB}{\mathcal{B}}

\newcommand{\sX}{\mathcal{X}}

\newcommand{\bbP}{\mathbb{P}}

\newcommand{\bbR}{\mathbb{R}}

\newcommand{\E}{\mathbb{E}}

\title{\vspace{-2em}
  \Large Nonparametric Regression with Measurement Error in Banach Spaces
  \vspace{-0.6em}
}

\author[1]{Pratim Guha Niyogi%
  \thanks{\texttt{pguhaniyogi@umc.edu}}}
\author[2]{Priyadarshi Dey\thanks{\texttt{deyp@millsaps.edu, priyadarshid4@gmail.com}}}%

\affil[1]{Department of Data Science, University of Mississippi Medical Center, Jackson, MS}
\affil[2]{Department of Mathematics, Millsaps College, Jackson, MS}

\date{\vspace{-1.5em}}

\begin{document}

\maketitle

\vspace{-1.0cm}

\begin{abstract}

We consider nonparametric regression with a Banach-space-valued predictor where the covariate takes values in a general separable Banach space. Our primary objective is to develop a unified framework for estimating the regression operator without relying on an inner-product structure, thereby extending classical and functional nonparametric regression beyond Euclidean and Hilbert spaces in the presence of measurement error. We establish the fundamental properties of the proposed estimator and develop its large-sample theory under conditions formulated in terms of the geometry and local concentration of the Banach-valued predictor. In particular, we study pointwise estimation and inference and discuss the construction of pointwise confidence intervals and uniform confidence bands. Our proposed framework is further extended to accommodate error-in-variables settings, where the predictor is observed with contamination. We also investigate related inverse problems arising from the construction of inverse weighting functions and consider partial contamination models. These developments provide a general framework for nonparametric regression and inference with complex infinite-dimensional predictors and identify several statistical and mathematical challenges that arise when the usual Hilbert-space structure is unavailable.

\end{abstract}

\noindent\textbf{Keywords:} pointwise confidence intervals; uniform
confidence bands; inverse weighting; Banach space regression

\section{Introduction}
Regression analysis provides a fundamental framework for describing and predicting the relationship between an outcome and one or more explanatory variables. In its classical nonparametric formulation, suppose that a real-valued response $Y \in \bbR$ and a predictor $X \in \bbR$ satisfy $Y=m(X)+\varepsilon$ with $\mathbb{E}\{\varepsilon\mid X\}=0$ where $m(x)=\mathbb{E}\{Y\mid X=x\}$ is the unknown regression function of primary interest. Since no particular parametric form is imposed on $m$, its value at a given point $x$ can be estimated using local smoothing techniques that borrow information from observations whose predictor values lie close to $x$. Such estimation is based on a sample of paired observations $\{\left(X_i,Y_i\right): i =1,\cdots,n\}$, where $n$ denotes the sample size. For example, a widely used example is the Nadaraya–Watson estimator, which employs a kernel function $K: \bbR \rightarrow [0, \infty)$ satisfying $\int K(u) du = 1$ and $K(u) = K(-u)$ for $u\in \bbR$ to assign observation-specific weights of the form $K((X_{i} - x)/h)$, where $h > 0$ is a bandwidth parameter. The bandwidth determines the size of the local neighborhood and consequently controls the degree of smoothing.  
\par
In many applications, the latent predictor $X_i$ is unavailable, and only an error-contaminated version $Z_i=X_i+U_i$ is observed, where $U_i$ denotes an unobserved measurement error. A seemingly natural strategy is to replace $X_i$ by $Z_i$ in the standard kernel-based Nadaraya–Watson estimator. This substitution fundamentally changes the target of the smoothing procedure. The neighborhood determined by $Z_i$ does not generally coincide with the neighborhood determined by the latent $X_i$, and the resulting estimator typically targets $\mathbb{E}\{Y\mid Z=x\}$, rather than the desired regression function $\mathbb{E}\{Y\mid X=x\}$. Consequently, even as the sample size increases, the naive estimator may remain biased and fail to consistently estimate $m(x)$. Recovering the regression relationship associated with the latent predictor, therefore, requires an explicit correction for the measurement-error mechanism, commonly through deconvolution, corrected-score, or related errors-in-variables techniques \citep{fan1993nonparametric, carroll2006measurement, delaigle2014nonparametric}.
\par
Rapid advances in data collection technologies have made it increasingly common to observe complex data as functions evolving over time, space, or both. This development has stimulated substantial interest in statistical methods for functional data analysis \citep{ramsay2005functional, ferraty2006nonparametric, horvath2012inference, wang2016functional, hsing2015theoretical, kokoszka2017introduction, crainiceanu2024functional}. Much of the existing literature has been formulated within Hilbert spaces, where the availability of inner-product geometry has facilitated the development of a mature and comprehensive theoretical framework. In many applications, functional observations are naturally represented by continuous, and often smoother, sample paths. It is therefore appealing to formulate their analysis directly in the Banach space of continuous functions rather than embedding them in a Hilbert space solely for analytical convenience. Such a formulation preserves the uniform geometry of the observed trajectories but also introduces substantial theoretical challenges because several tools associated with inner-product spaces are no longer available. These difficulties become particularly pronounced when the functional predictor is observed with measurement error. Motivated by this problem, the primary objective of this paper is to investigate how measurement error affects classical nonparametric regression when the predictor takes values in a Banach space and to develop an appropriate estimation framework that accounts for such contamination. In Section \ref{sec:method}, we formulate the problem and introduce a new statistical framework for analyzing Banach space-valued random variables. In Section \ref{sec:results}, we present the basic theoretical results underlying the proposed methodology and systematically state the assumptions required for the subsequent developments. We also discuss the role and interpretation of these assumptions, providing a common theoretical foundation that will be repeatedly used in the later sections. 
\par
Having established the general framework and the basic theoretical results, we consider several classical problems in nonparametric regression in Section \ref{sec:example}. In particular, we examine how these problems can be formulated and studied when the predictor is a Banach space-valued random element. We begin with pointwise inference in Section \ref{sec:cb-point} for the regression function $m(x)$, including pointwise consistency, convergence rates, and asymptotic normality of the proposed estimator, which in turn provide the theoretical basis for pointwise hypothesis testing and the construction of pointwise confidence intervals. In Section \ref{sec:cb-uniform}, we further establish uniform convergence of the proposed estimator over suitable subsets of the predictor space and use the resulting uniform asymptotic theory to construct simultaneous confidence bands for $m$. 

An important feature of the error-in-variables setting is its connection with statistical inverse problems. When the predictor is observed with measurement error, the data provide only a distorted version of the latent predictor, and estimation requires recovering the information needed for regression from the contaminated observations. In finite-dimensional settings, this is often approached through deconvolution, whereas for Banach space-valued predictors the usual Fourier-based techniques may not be directly available. Our approach instead treats the measurement-error mechanism through an operator framework and focuses on constructing inverse weights that reproduce, exactly or approximately, the kernel weights that would have been available from the uncontaminated predictor. This formulation highlights the central roles of identifiability, stability, and regularization, particularly when exact inversion is unavailable or ill-posed. We discuss this inverse-problem perspective and the corresponding construction of inverse weights in Section \ref{sec:invWeight}.

Finally, we consider partial contamination models in Section~\ref{sec:partial}, where the measurement-error distribution is a mixture of a point mass at zero and a nondegenerate contamination distribution. Such models provide an intermediate setting between completely observed and fully contaminated Banach-valued predictors and arise naturally when complex functional objects are assembled from measurements of differing reliability.

In Section \ref{sec:disc}, we conclude the paper with an overall discussion of the proposed framework, its implications, and potential directions for future research. For completeness and to maintain the flow of the main presentation, proofs of all theoretical results established throughout the paper are deferred to the Appendix.

\section{Methodology}
\label{sec:method}
Let $(\mathcal{B},\|\cdot\|_{\mathcal{B}})$ be a real separable Banach space. Let $X$ be a $\mathcal{B}$-valued latent predictor and let $Y\in\mathbb{R}$ satisfy $\mathbb{E}|Y|^2<\infty$. In this article, we study the estimation of the nonparametric regression model $Y = m(X) + \varepsilon$, where $Y: \Omega \rightarrow \bbR$ is the response variable, $X: \Omega \rightarrow \sB$ is the predictor, and the error $\varepsilon: \Omega \rightarrow \bbR$ is an error term satisfying the condition $\E\{\varepsilon|X\} = 0$. Here $\Omega$ denotes the underlying sample space and $m: \sB \rightarrow \bbR$ is a fixed Borel version of the unknown regression function to be estimated. 
\par
In addition to the preceding setup, we assume that $X$ is not observed directly; instead, we observe an error-contaminated surrogate $W = X + U$ where $U: \Omega \rightarrow \sB$ is an unobserved, Banach space-valued random measurement error that is statistically independent of the pair $(X, Y)$. Therefore, the data consist of independent copies $\{(Y_i, W_i): i = 1, \cdots, n\}$ of $(Y, W)$. Throughout, $P_X$, $P_U$, and $P_W$ denote the distributions of $X$, $U$, and $W$, respectively. We assume that $P_U$ is known. Additional conditions will be introduced as needed throughout the paper for these distributions.
\par
For a bounded kernel $K:[0,\infty)\to[0,\infty)$ and bandwidth $h>0$, define $K_{h,x}(z) = K\left(\frac{\|z-x\|_{\mathcal{B}}}{h}\right)$. If $X_i$ were observed, the local-constant population target would be
\begin{equation}
m_h(x) = \frac{\mathbb{E}\{m(X)K_{h,x}(X)\}}{\mathbb{E} \{K_{h,x}(X)\}}.    
\end{equation}
Under local regularity of $m$, $m_h(x)$ approaches $m(x)$ as $h\downarrow0$.
Replacing $X$ by $W$ instead targets a local regression based on the contaminated predictor and does not, in general, recover $m(x)$.

For every bounded Borel function $L:\mathcal{B}\to\mathbb{R}$, define the measurement-error operator
\begin{equation}
    (\mathcal{A}_U L)(z) = \int_{\mathcal{B}}L(z+u)\,P_U(du).
\end{equation}
Independence of $U$ and $(X,Y)$ gives the central identities
\begin{equation}\label{eq:conditional-action}
\E\{L(W)\mid X\}=(\mathcal A_U L)(X),
\qquad
\E\{YL(W)\}
   =\E\{m(X)(\mathcal A_U L)(X)\}.
\end{equation}
For a given \(h>0\) and \(x\in\mathcal{B}\), suppose that there
exists a bounded Borel-measurable function
\(L_{h,x}:\mathcal{B}\to\mathbb{R}\) satisfying
\begin{equation}\label{eq:exact-inverse}
    (\mathcal{A}_U L_{h,x})(z)
    =
    K_{h,x}(z)
    \qquad \text{for }P_X\text{-almost every }z\in\mathcal{B}.
\end{equation}
In infinite-dimensional inverse problems, the operator equation \eqref{eq:exact-inverse} may be ill-posed: an exact solution need not exist, and even when a solution exists, it may fail to depend continuously on the target $K_{h,x}$; see \citet{CarrascoFlorensRenault2007}. We therefore use a regularized approximate inverse $L_{\lambda,h,x}$ and define its residual $R_{\lambda,h,x}$ by
\begin{equation}\label{eq:approximate-inverse}
(\mathcal{A}_U L_{\lambda,h,x})(z) = K_{h,x}(z)+R_{\lambda,h,x}(z).
\end{equation}

\noindent Write
\[\widehat{N}_{\lambda,h}(x) = \frac{1}{n}\sum_{i=1}^nY_iL_{\lambda,h,x}(W_i), \quad \widehat{D}_{\lambda,h}(x) = \frac{1}{n}\sum_{i=1}^nL_{\lambda,h,x}(W_i). \]
\noindent We may also write $\widehat D_n^L(x):=\widehat D_{\lambda_n,h_n}(x).$ The resulting estimator is 

\begin{equation}\label{eq:estimator}
\widehat m_{\lambda,h}(x)=
\begin{cases}
\widehat N_{\lambda,h}(x)/\widehat D_{\lambda,h}(x),&
\widehat D_{\lambda,h}(x)\ne0,\\[2mm]
0,&\widehat D_{\lambda,h}(x)=0.
\end{cases}
\end{equation}
For later use, define the following notations. 
\begin{notation}
\label{eq:residual}
For a fixed bandwidth $h>0$ and a point $x\in\mathcal{B}$, we introduce the following notation for the population-level denominator, numerator, and the corresponding smoothed regression function: $D_h(x)=\mathbb{E}\{K_{h,x}(X)\}, 
N_h(x)=\mathbb{E}\{m(X)K_{h,x}(X)\},
m_h(x)=\frac{N_h(x)}{D_h(x)}$
provided that $D_h(x)>0$. 
We also write $D^L_{\lambda,h}(x)=\mathbb{E}\{L_{\lambda,h,x}(W)\}$, $N^L_{\lambda,h}(x)=\mathbb{E}\{YL_{\lambda,h,x}(W)\}$ and $m^L_{\lambda,h}(x)=\frac{N^L_{\lambda,h}(x)}{D^L_{\lambda,h}(x)}$, provided $D^L_{\lambda,h}(x)\ne0$. 
\end{notation}
\begin{notation}
\label{eq:s-definitions}
    We also introduce notation for the population-level quantities associated with the residual kernels. For $\lambda>0$, $h>0$, and $x\in\mathcal{B}$, define
    \begin{align*}
r_{0,\lambda,h}(x)&=\mathbb{E}\{R_{\lambda,h,x}(X)\}, & r_{1,\lambda,h}(x)&=\mathbb{E}\{m(X)R_{\lambda,h,x}(X)\},\label{eq:r-definitions}\\
s^2_{0,\lambda,h}(x)&=\operatorname{Var}\{L_{\lambda,h,x}(W)\}, & s^2_{1,\lambda,h}(x)&=\operatorname{Var}\{YL_{\lambda,h,x}(W)\}.
\end{align*}
\end{notation}
\begin{notation}
    Fix $x\in\mathcal{B}$. Let $\{h_n\}_{n\geq1} \downarrow 0$ and $\{\lambda_n\}_{n\geq1}$ be positive sequences such that $h_n \downarrow 0$
    and $\lambda_n \downarrow 0$. Further define $L_n:=L_{\lambda_n,h_n,x}$ and $R_n:=R_{\lambda_n,h_n,x}$.
\end{notation}

\section{Results}
\label{sec:results}

In this section, we develop the theoretical foundation for the subsequent developments in Section \ref{sec:example}. We begin by introducing a first set of assumptions under which we establish the initial theoretical properties of the proposed framework. We discuss the mathematical and statistical roles of these assumptions, together with their practical interpretations and implications for the resulting theory. Following these results and the accompanying discussion, we introduce a second set of assumptions tailored to the additional theoretical questions considered later in this section. These two sets of conditions therefore serve distinct purposes and provide the basis for the corresponding theoretical and inferential developments that follow.

Assume the following.

\begin{enumerate}[label=(A\arabic*)]
\item\label{cond:K} The function $K:[0,\infty)\to[0,\infty)$ is bounded, Borel measurable, is not identically zero, and satisfies $\operatorname{supp}(K)\subseteq[0,1]$.

\item\label{cond:D} For all sufficiently large $n$, $D_{h_n}(x)>0$.

\item\label{cond:m} There exist $\delta>0$, $\alpha\in(0,1]$, and $L_m<\infty$ such that
\[
|m(z)-m(x)|\le L_m\|z-x\|_{\mathcal{B}}^\alpha
\]
whenever $\|z-x\|_{\mathcal{B}}\le\delta$. Moreover, for some $M_m<\infty$, $|m(X)|\le M_m$ almost surely.

\item\label{cond:residual} The approximate-inverse residual satisfies
\[
\frac{|r_{0,\lambda_n,h_n}(x)|+|r_{1,\lambda_n,h_n}(x)|}{D_{h_n}(x)}\longrightarrow0, \text{as } n \rightarrow \infty.
\]

\item\label{cond:L} For every $n$, $L_n(W)\in L^2$, $YL_n(W)\in L^2$, and
\[
\frac{s_{0,\lambda_n,h_n}(x)+s_{1,\lambda_n,h_n}(x)}{\sqrt{n}\,D_{h_n}(x)}\longrightarrow0 , \text{as } n \rightarrow \infty.
\]
\end{enumerate}

Assumptions \ref{cond:K}--\ref{cond:m} impose standard regularity requirements commonly used in classical nonparametric analysis and in the literature on nonparametric regression. 
Based on Assumption \ref{cond:K}, we impose the boundedness and compact support requirement on the kernel $K$. Restricting the support of $K$ to $[0,1]$ ensures that an observation receives nonzero weight only when $\|z-x\|_{\sB} \leq h$. This gives the estimator its local character: observations lying outside the $h$-neighborhood of $x\in \sB$ make no contribution to the corresponding kernel average. From a theoretical perspective, boundedness prevents any single kernel weight from becoming arbitrarily large and facilitates uniform control of the expectations, variances, and stochastic fluctuations of the kernel-weighted terms. Compact support further localizes these quantities to a shrinking neighborhood of $x$, thereby allowing their asymptotic behavior to be described in terms of the corresponding small-ball probabilities. In particular, the condition yields the elementary bound $0\leq K\left(\frac{\|z-x\|_{\sB}}{h}\right) \leq \lVert K\rVert_{\infty}\mathbf{1}\{\|z-x\|_{\sB}\leq h\}$, which is repeatedly useful for controlling kernel moments and deriving convergence rates. Kernels satisfying this or closely related conditions are routinely employed in nonparametric regression with functional predictors and in related functional-data problems; see, for example, \citet{ferratymasvieu2007, FerratyVanKeilegomVieu2012, geenens2011curse, chowdhury2019nonparametric, zhu2017kernel}. 

Due to Assumption \ref{cond:D}, $D_{h_n}(x)$ measures the amount of probability mass assigned by the distribution of $X$ to an $h_n$-neighborhood of $x$, weighted according to the kernel. Thus, the requirement $D_{h_n}(x)>0$ for all sufficiently large $n$ guarantees that, even as the bandwidth shrinks to zero, the target point $x$ remains locally accessible under the distribution of the functional covariate. This condition is necessary for the population-level smoothed regression function. Moreover, H\"older-type smoothness Assumption \ref{cond:m} are routinely imposed to control approximation errors in nonparametric estimation and functional-data problems. See \citet{lian2011convergence, chen2011single, delsol2011structural, chagny2016adaptive} for more details. 

Assumption \ref{cond:residual} controls the bias introduced by replacing the exact inverse by an approximate inverse. Indeed, $r_{0,\lambda_n,h_n}(x)$ and $r_{1,\lambda_n,h_n}(x)$ are the effects of the residual on the population denominator and numerator, respectively. Because the estimator is a ratio whose population denominator is of order $D_{h_n}(x)$, these residual terms must be negligible relative to $D_{h_n}(x)$. In particular, since $|m(X)|\le M_m$ almost surely, $|r_{0,\lambda_n,h_n}(x)| + |r_{1,\lambda_n,h_n}(x)| \le (1+M_m) \|R_{\lambda_n,h_n,x}\|_{L^2(P_X)}$. Consequently, a sufficient condition for Assumption \ref{cond:residual} is
\[
\frac{\|R_{\lambda_n,h_n,x}\|_{L^2(P_X)}}{D_{h_n}(x)} \longrightarrow0, \text{ as } n \rightarrow \infty.
\]
For an exact inverse, $R_{\lambda_n,h_n,x}=0$, so Assumption \ref{cond:residual} holds automatically.

Assumption \ref{cond:L} controls the stochastic fluctuations of the numerator and denominator. Their standard deviations are of orders $\frac{s_{1,\lambda_n,h_n}(x)}{\sqrt{n}}$ and $\frac{s_{0,\lambda_n,h_n}(x)}{\sqrt{n}}$, respectively. Dividing these fluctuations by the population denominator $D_{h_n}(x)$ gives the normalization in Assumption \ref{cond:L}. Thus, this assumption ensures both that the sampling error vanishes and that the denominator remains separated from zero with probability tending to one. If, in addition, $\mathbb{E}(Y^2\mid W)\le M_Y$ almost surely, then $s_{0,\lambda_n,h_n}(x) + s_{1,\lambda_n,h_n}(x) \le (1+\sqrt{M_Y}) \|L_{\lambda_n,h_n,x}\|_{L^2(P_W)}$.
Hence, a sufficient condition for Assumption \ref{cond:L} is
\[
\frac{\|L_{\lambda_n,h_n,x}\|_{L^2(P_W)}}{\sqrt{n}\,D_{h_n}(x)} \longrightarrow0 \text{ as } n \rightarrow \infty.
\]

Assumptions \ref{cond:residual} and \ref{cond:L} therefore express the usual regularization trade-off: improving the approximation to $K_{h_n,x}$ reduces the residual in Assumption \ref{cond:residual}, but may increase the norm and variability of the inverse weight, making Assumption \ref{cond:L} more difficult to satisfy.

The following result brings together the main components required to control the pointwise estimation error of the proposed estimator. First, it characterizes the discrepancy between the regularized population numerator and denominator and their uncontaminated kernel counterparts through the remainder terms $r_{0,\lambda_n,h_n}(x)$, $r_{1,\lambda_n,h_n}(x)$, respectively. It then controls the smoothing bias using the local H\"older regularity of the regression function. Finally, it establishes the asymptotic stability of both the empirical and regularized population denominators and incorporates the sampling-error bound derived earlier.

\begin{lemma}\label{lem:technical-details}
Under Assumptions \ref{cond:K}--\ref{cond:L}, the following statements hold.
\begin{enumerate}[label=(\roman*)]
\item For every $(\lambda,h)$ for which the expectations exist,
\begin{equation}\label{eq:population-identities}
D^L_{\lambda,h}(x)=D_h(x)+r_{0,\lambda,h}(x), \qquad N^L_{\lambda,h}(x)=N_h(x)+r_{1,\lambda,h}(x).
\end{equation}

\item For all sufficiently large $n$, $|m_{h_n}(x)-m(x)|\le L_mh_n^\alpha.$

\item
\begin{equation}\label{eq:den-relative}
\frac{\widehat{D}_{\lambda_n,h_n}(x)}{D_{h_n}(x)} \overset{\bbP}{\longrightarrow}1, \qquad \frac{D^L_{\lambda_n,h_n}(x)}{D_{h_n}(x)}\longrightarrow1.
\end{equation}
In particular, the following holds.
\begin{equation}\label{eq:den-positive}
\mathbb{P}\!\left\{\widehat{D}_{\lambda_n,h_n}(x) \ge \frac{1}{2}D_{h_n}(x)\right\}\longrightarrow1,
\end{equation}

\item The sampling error satisfies \begin{equation}\label{eq:sampling-bound}
\widehat{m}_{\lambda_n,h_n}(x)-m^L_{\lambda_n,h_n}(x)
=O_{\mathbb{P}}\!\left(
\frac{s_{0,\lambda_n,h_n}(x)+s_{1,\lambda_n,h_n}(x)}
{\sqrt{n}\,D_{h_n}(x)}\right).
\end{equation}
\end{enumerate}
\end{lemma}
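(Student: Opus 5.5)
We prove the four parts in order; each later part uses the earlier ones.

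For (i), we apply the central identity \eqref{eq:conditional-action} with $L=L_{\lambda,h,x}$, which is bounded and Borel, or at least integrable whenever the expectations exist. The first identity gives $D^L_{\lambda,h}(x)=\E\{L_{\lambda,h,x}(W)\}=\E\{(\mathcal A_U L_{\lambda,h,x})(X)\}$. Substituting \eqref{eq:approximate-inverse} turns this into $\E\{K_{h,x}(X)\}+\E\{R_{\lambda,h,x}(X)\}=D_h(x)+r_{0,\lambda,h}(x)$. For the numerator we use the second identity: condition on $(X,Y)$, use independence of $U$, and then use $\E\{Y\mid X\}=m(X)$. This gives $N^L_{\lambda,h}(x)=\E\{m(X)(\mathcal A_UL_{\lambda,h,x})(X)\}=N_h(x)+r_{1,\lambda,h}(x)$. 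The only care needed is integrability: we invoke Fubini through $E|Y|^2<\infty$ and the $L^2$ condition in \ref{cond:L}.

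For (ii), we take $n$ large enough that $h_n\le\delta$ and, by \ref{cond:D}, $D_{h_n}(x)>0$. By \ref{cond:K}, $K_{h_n,x}(z)=0$ unless $\|z-x\|_{\sB}\le h_n$. We then write
\[
m_{h_n}(x)-m(x)=\frac{\E\{(m(X)-m(x))K_{h_n,x}(X)\}}{D_{h_n}(x)}.
\]
On the support of the kernel, \ref{cond:m} gives $|m(X)-m(x)|\le L_m h_n^\alpha$. Since $K\ge0$, the ratio is at most $L_mh_n^\alpha$ in absolute value.

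For (iii), the deterministic statement follows from (i): $D^L_{\lambda_n,h_n}(x)/D_{h_n}(x)=1+r_{0,\lambda_n,h_n}(x)/D_{h_n}(x)\to1$ by \ref{cond:residual}. For the random part we use Chebyshev's inequality. We have $\E\widehat D_{\lambda_n,h_n}(x)=D^L_{\lambda_n,h_n}(x)$ and $\operatorname{Var}\widehat D_{\lambda_n,h_n}(x)=s^2_{0,\lambda_n,h_n}(x)/n$, so
\[
\frac{\widehat D_{\lambda_n,h_n}(x)-D^L_{\lambda_n,h_n}(x)}{D_{h_n}(x)}=O_{\bbP}\!\Big(\frac{s_0}{\sqrt n D_{h_n}}\Big)=o_{\bbP}(1)
\]
by \ref{cond:L}. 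Combining the two statements gives the ratio limit. Then \eqref{eq:den-positive} follows because the event $\{\widehat D/D_{h_n}\ge 1/2\}$ contains $\{|\widehat D/D_{h_n}-1|<1/2\}$, whose probability tends to one.

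For (iv), which we expect to be the main obstacle because of the ratio, we work on the event $E_n=\{\widehat D\ge D_{h_n}/2\}$, which has probability tending to one by (iii). On $E_n$ the estimator equals $\widehat N/\widehat D$, and we use the algebraic decomposition
\[
\widehat m-m^L=\frac{(\widehat N-N^L)-m^L(\widehat D-D^L)}{\widehat D}.
\]
By Chebyshev, $\widehat N-N^L=O_{\bbP}(s_1/\sqrt n)$ and $\widehat D-D^L=O_{\bbP}(s_0/\sqrt n)$, while on $E_n$ we have $1/\widehat D\le 2/D_{h_n}$. It remains to show $m^L_{\lambda_n,h_n}(x)$ is bounded. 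By (i), $m^L=(N_h+r_1)/(D_h+r_0)$. Here $|N_h|\le M_mD_h$ by \ref{cond:m}, and \ref{cond:residual} keeps both $r_0/D_h$ and $r_1/D_h$ small, so $|m^L|\le 2M_m+1$ eventually. Putting these together, the difference is $O_{\bbP}((s_0+s_1)/(\sqrt nD_{h_n}))$ on $E_n$. Since $\bbP(E_n^c)\to0$, the bound holds unconditionally in the $O_{\bbP}$ sense.
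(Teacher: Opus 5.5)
Your proof is correct and follows the paper's route. Parts (i)--(iii) match step for step. In (iv), you apply Chebyshev separately to $\widehat N-N^L$ and $\widehat D-D^L$, whereas the paper bundles them into the single centered variable $\{Y-m^L_{\lambda_n,h_n}(x)\}L_n(W)$ and bounds its standard deviation by $s_{1,\lambda_n,h_n}(x)+|m^L_{\lambda_n,h_n}(x)|\,s_{0,\lambda_n,h_n}(x)$; this is the same estimate. Your explicit restriction to the event $\{\widehat D_{\lambda_n,h_n}(x)\ge D_{h_n}(x)/2\}$ also handles the zero-denominator case, which the paper leaves implicit.
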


The following theorem provides an explicit upper bound for the estimation error $\left|\widehat{m}_{\lambda_n,h_n}(x)-m(x)\right|$, thereby quantifying the accuracy of the proposed estimator at a fixed point $x$. In particular, the bound clarifies how the estimation error is governed by the smoothing parameter $h_n$, the regularization parameter $\lambda_n$, the local behavior of the regression function, and the stochastic variability arising from the observed data. Consequently, the result not only establishes pointwise consistency under suitable choices of $\lambda_n$ and $h_n$, but also provides theoretical guidance for balancing the different sources of error when selecting these tuning parameters.

\begin{theorem}\label{thm:main}
Under Assumptions \ref{cond:K}--\ref{cond:L},
\begin{align}
\left|\widehat{m}_{\lambda_n,h_n}(x)-m(x)\right|
&=O_{\mathbb{P}}\Bigg[h_n^\alpha
+\frac{|r_{0,\lambda_n,h_n}(x)|+|r_{1,\lambda_n,h_n}(x)|}
{D_{h_n}(x)}+\frac{s_{0,\lambda_n,h_n}(x)+s_{1,\lambda_n,h_n}(x)}
{\sqrt{n}\,D_{h_n}(x)}\Bigg].\label{eq:main-rate}
\end{align}
Consequently,
\begin{equation}\label{eq:consistency}
\widehat{m}_{\lambda_n,h_n}(x)\overset{\bbP}{\longrightarrow}m(x).
\end{equation}
Moreover,
\begin{equation*}
\mathbb{P}\!\left\{\widehat{D}_{\lambda_n,h_n}(x) \ge \frac{1}{2}D_{h_n}(x)\right\}\longrightarrow1,
\end{equation*}
so the estimator in \eqref{eq:estimator} is well defined with probability tending to one.
\end{theorem}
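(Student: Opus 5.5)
The plan is the standard three-term decomposition
\[
\widehat m_{\lambda_n,h_n}(x)-m(x)=\bigl\{\widehat m_{\lambda_n,h_n}(x)-m^L_{\lambda_n,h_n}(x)\bigr\}+\bigl\{m^L_{\lambda_n,h_n}(x)-m_{h_n}(x)\bigr\}+\bigl\{m_{h_n}(x)-m(x)\bigr\}.
\]
Each piece is controlled by one part of Lemma~\ref{lem:technical-details}. The first (sampling) term is exactly \eqref{eq:sampling-bound} in part (iv), giving the rate $(s_{0}+s_{1})/(\sqrt n D_{h_n}(x))$. The third (smoothing bias) term is at most $L_m h_n^\alpha$ for large $n$ by part (ii). Since this bound is deterministic, it is trivially $O_{\mathbb P}(h_n^\alpha)$.

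The middle term is the regularization bias and is handled with part (i). Write $D=D_{h_n}(x)$, $N=N_{h_n}(x)$, $r_j=r_{j,\lambda_n,h_n}(x)$. By \eqref{eq:population-identities},
\[
m^L-m_{h_n}=\frac{N+r_1}{D+r_0}-\frac{N}{D}=\frac{r_1-m_{h_n}(x)\,r_0}{D+r_0}.
\]
By \eqref{eq:den-relative} (equivalently, by \ref{cond:residual}), $(D+r_0)/D\to1$, so $D+r_0\ge D/2>0$ for large $n$; in particular $m^L$ is well defined. Also $|m_{h_n}(x)|\le M_m$, because $m_{h_n}(x)$ is a $K$-weighted average of $m(X)$ with nonnegative weights and $|m(X)|\le M_m$ almost surely. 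Together these give
\[
|m^L-m_{h_n}|\le 2(1+M_m)\,\frac{|r_0|+|r_1|}{D}
\]
for all large $n$, which is the second term of \eqref{eq:main-rate}. Summing the three bounds yields \eqref{eq:main-rate}.

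Consistency \eqref{eq:consistency} follows because each of the three rates tends to zero: $h_n\downarrow0$, \ref{cond:residual} handles the residual ratio, and \ref{cond:L} handles the stochastic ratio. A sequence that is $O_{\mathbb P}(a_n)$ with deterministic $a_n\to0$ is $o_{\mathbb P}(1)$. The final claim is \eqref{eq:den-positive} from part (iii), restated. On that event $\widehat D_{\lambda_n,h_n}(x)\ge D_{h_n}(x)/2>0$ by \ref{cond:D}, so the first branch of \eqref{eq:estimator} applies and the estimator is a genuine ratio.

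Given the lemma, the only step needing care is the middle term. There one must make sure the regularized denominator $D^L$ is eventually bounded away from zero relative to $D_{h_n}(x)$, and that $m_{h_n}(x)$ stays bounded, so that the ratio perturbation is linear in $r_0,r_1$. If Lemma~\ref{lem:technical-details}(iv) had not been available, the main obstacle would instead be the sampling term. There I would write
\[
\widehat m-m^L=\frac{(\widehat N-N^L)-m^L(\widehat D-D^L)}{\widehat D},
\]
apply Chebyshev to get $\widehat N-N^L=O_{\mathbb P}(s_1/\sqrt n)$ and $\widehat D-D^L=O_{\mathbb P}(s_0/\sqrt n)$, and use \eqref{eq:den-positive} to replace $\widehat D$ by $D/2$ on an event of probability tending to one.
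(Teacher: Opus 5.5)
Your proposal is correct and follows the paper's proof essentially step for step. It uses the same three-term decomposition and the same bound $2(1+M_m)\{|r_0|+|r_1|\}/D_{h_n}(x)$ for the regularization bias, obtained from $D_{h_n}(x)+r_0\ge D_{h_n}(x)/2$ and $|m_{h_n}(x)|\le M_m$, and it invokes Lemma~\ref{lem:technical-details}(ii)--(iv) for the smoothing bias, the sampling error, and positivity of the empirical denominator, just as the paper does.
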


For all sufficiently large $n$, Lemma~\ref{lem:technical-details} gives $D^{L}_{\lambda_n, h_n}(x)>0$ and define
\begin{equation}\label{eq:sigma-definition}
\sigma_n^2(x) = \frac{\mathbb{E}\!\left[\{Y-m^L_{\lambda_n,h_n}(x)\}^2L_n^2(W)\right]}{D^L_{\lambda_n,h_n}(x)}.
\end{equation}
The same argument as in the proof of Lemma~\ref{lem:technical-details} gives the pointwise stochastic order of the estimator around its population-level regularized target
\begin{equation}\label{eq:sigma-form}
\widehat{m}_{\lambda_n,h_n}(x)-m^L_{\lambda_n,h_n}(x) = O_{\mathbb{P}}\!\left(\frac{\sigma_n(x)}{\sqrt{nD^L_{\lambda_n,h_n}(x)}}\right).
\end{equation}

Equivalently, the normalized sequence $\frac{\sqrt{nD^L_{\lambda_n,h_n}(x)}}
{\sigma_n(x)}
\left\{
\widehat m_{\lambda_n,h_n}(x)
-
m^L_{\lambda_n,h_n}(x)
\right\}$
is bounded in probability. Thus, the stochastic accuracy is governed
jointly by the weighted residual variability $\sigma_n^2(x)$ and the
normalizing quantity $D^L_{\lambda_n,h_n}(x)$. A larger value of
$\sigma_n^2(x)$ increases the sampling fluctuation, whereas a larger
positive value of $D^L_{\lambda_n,h_n}(x)$ improves the stochastic
precision through the factor $\sqrt{nD^L_{\lambda_n,h_n}(x)}$. The following lemma establishes the required variance-negligibility condition.

\begin{lemma}
\label{lemma:sigma}
    Under the Assumptions \ref{cond:K}--\ref{cond:L}, as $n \rightarrow \infty$, $\dfrac{\sigma_n^2(x)}
{\{nD^L_{\lambda_n,h_n}(x)\}}  \longrightarrow 0$.
\end{lemma}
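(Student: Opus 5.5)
Write $D^L_n:=D^L_{\lambda_n,h_n}(x)$ and $m^L_n:=m^L_{\lambda_n,h_n}(x)$. The plan is to reduce the ratio $\sigma_n^2(x)/\{nD^L_n\}$ to the quantity controlled in Assumption \ref{cond:L}. By definition \eqref{eq:sigma-definition},
\[
\frac{\sigma_n^2(x)}{nD^L_n}
=\frac{\mathbb{E}\!\left[\{Y-m^L_n\}^2L_n^2(W)\right]}{n\,\{D^L_n\}^2}.
\]
By Lemma~\ref{lem:technical-details}(iii), $D^L_n/D_{h_n}(x)\to1$. So for large $n$ we have $D^L_n\ge \tfrac12 D_{h_n}(x)>0$, and it suffices to prove
\[
\frac{\mathbb{E}\!\left[\{Y-m^L_n\}^2L_n^2(W)\right]}{n\,D_{h_n}(x)^2}\longrightarrow0 .
\]

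Next, expand around the centred variables. Put $A=YL_n(W)-N^L_{\lambda_n,h_n}(x)$ and $B=L_n(W)-D^L_n$, so that $\mathbb{E}A=\mathbb{E}B=0$, $\mathbb{E}A^2=s_1^2$ and $\mathbb{E}B^2=s_0^2$ (dropping subscripts). Since $N^L=m^L_n D^L_n$, we get
\[
\{Y-m^L_n\}L_n(W)=YL_n(W)-m^L_nL_n(W)=A-m^L_nB .
\]
This identity is the key step. Therefore
\[
\mathbb{E}\!\left[\{Y-m^L_n\}^2L_n^2(W)\right]\le 2s_1^2+2(m^L_n)^2s_0^2 .
\]

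It remains to show that $m^L_n$ is bounded. Using \eqref{eq:population-identities},
\[
m^L_n=\frac{N_{h_n}(x)+r_{1}}{D_{h_n}(x)+r_{0}}
=\frac{N_{h_n}(x)/D_{h_n}(x)+r_1/D_{h_n}(x)}{1+r_0/D_{h_n}(x)} .
\]
Here $|N_{h_n}(x)/D_{h_n}(x)|=|m_{h_n}(x)|\le M_m$ by Assumption \ref{cond:m}, because $K\ge0$. The residual ratios tend to $0$ by Assumption \ref{cond:residual}. Hence $|m^L_n|\le 2M_m+1$ for large $n$.

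Putting the pieces together,
\[
\frac{\sigma_n^2(x)}{nD^L_n}\le C\,\frac{s_0^2+s_1^2}{n D_{h_n}(x)^2}\le C\left(\frac{s_0+s_1}{\sqrt n\,D_{h_n}(x)}\right)^{2}\longrightarrow0
\]
by Assumption \ref{cond:L}. There is no serious obstacle. The only delicate points are replacing $D^L_n$ by $D_{h_n}(x)$, including its positivity, and bounding $m^L_n$. Both are handled by Lemma~\ref{lem:technical-details} and Assumption \ref{cond:residual}.
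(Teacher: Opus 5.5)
Your proof is correct and follows essentially the same route as the paper. Both arguments use the centered decomposition $\{Y-m^L_n\}L_n(W)=A-m^L_nB$, the bound $|m^L_n|\le 2M_m+1$ from Assumption \ref{cond:residual}, and $D^L_n/D_{h_n}(x)\to1$ to reduce to Assumption \ref{cond:L}; the only cosmetic difference is that you use $(a+b)^2\le 2a^2+2b^2$ where the paper applies the $L^2$ triangle inequality before squaring.
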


The above lemma is sufficient for $\widehat m_{\lambda_n,h_n}(x)
-
m^L_{\lambda_n,h_n}(x)
\overset{\bbP}{\longrightarrow}0$. Now, this conclusion concerns convergence to the regularized population
target $m^L_{\lambda_n,h_n}(x)$, rather than directly to the true
regression function $m(x)$. Indeed, the total pointwise estimation
error admits the decomposition
\[
\widehat m_{\lambda_n,h_n}(x)-m(x)
=
\left\{
\widehat m_{\lambda_n,h_n}(x)
-
m^L_{\lambda_n,h_n}(x)
\right\}
+
\left\{
m^L_{\lambda_n,h_n}(x)-m(x)
\right\},
\]
where the first term is the stochastic error quantified by \eqref{eq:sigma-form} and the second is the regularization and smoothing bias. Therefore, pointwise consistency for $m(x)$ follows once the stochastic scale tends to zero, and the population target satisfies $m^L_{\lambda_n,h_n}(x)\to m(x)$. We establish the pointwise asymptotic normality of the proposed estimator in Theorem \ref{thm:pointwise-asymptotic-normality} in Section \ref{sec:cb-point}, which provides the theoretical foundation for constructing pointwise confidence intervals for $m(x)$ where $x\in \sB$.

We next strengthen the preceding pointwise convergence result by establishing a uniform error bound over a totally bounded subset $\mathcal X$ of $\mathcal B$. The principal additional difficulty is that both the numerator and denominator processes must now be controlled simultaneously over $x\in\mathcal X$. In particular, the population
denominator must remain uniformly nondegenerate, while the empirical fluctuations of the kernel-weighted numerator and denominator must be uniformly negligible relative to this normalization. The resulting bound separates the total uniform estimation error into a stochastic component and a population approximation component. For the uniform analysis, we first introduce the following notations. 
\begin{notation}
For an integrable function $g$ of $(Y,W)$, we write 
\[\mathbb P g:=\mathbb E\{g(Y,W)\} \quad\; \text{and}\quad \mathbb P_n g:=\frac{1}{n}\displaystyle\sum_{i=1}^n g(Y_i,W_i),\] 
and, analogously, write $N_n^L(x):=\E\{YL_{n,x}(W)\}$ for the population numerator and
\[
\widehat N_n^L(x):=\mathbb P_n\{YL_{n,x}(W)\}=\frac1n\sum_{i=1}^nY_iL_{n,x}(W_i)
\]
for its empirical counterpart.
\end{notation}

\noindent The uniform stochastic conclusion uses the following conditions:

\begin{enumerate}[label=\textup{(U\arabic*)},leftmargin=2.4em]

\item\label{unif:measurability}
The set $\mathcal X$ is nonempty and totally bounded under
$\|\cdot\|_{\mathcal B}$.  For every $n$, the functions
$\{L_{n,x}:x\in\mathcal X\}$ are deterministic Borel functions, and
$L_{n,x}(W)$ and $YL_{n,x}(W)$ are integrable for every
$x\in\mathcal X$.  All function classes below are pointwise measurable, or have pointwise-separable versions.  Assume also that
every displayed supremum involving the estimator is measurable.  These
conditions ensure that all probabilities and stochastic orders below
are ordinary probabilities rather than outer probabilities.

\item\label{unif:kernel-holder}
The kernel $K:[0,\infty)\to[0,\infty)$ is Borel measurable and bounded,
and
$\operatorname{supp}(K)\subseteq[0,1]$.  For all sufficiently large
$n$, $D_{h_n}(x)>0, \,\, \text{for every }x\in\mathcal X.$
Moreover, $\E|m(X)|<\infty$, so the numerators defining
$m_{h_n}(x)$ are finite.
There are constants $\delta>0$, $\alpha\in(0,1]$, and
$L_m<\infty$, independent of $x$, such that $|m(z)-m(x)|
\le L_m\|z-x\|_{\mathcal B}^{\alpha}$
whenever $x\in\mathcal X$ and
$\|z-x\|_{\mathcal B}\le\delta$.

\item\label{unif:population-denominator}
For all sufficiently large $n$, $D_{\lambda_n,h_n}^L(x)>0$ for every
$x\in\mathcal X$, and $d_n
:=
\inf_{x\in\mathcal X}D_{\lambda_n,h_n}^L(x)>0$.

\item\label{unif:bernstein}
For all sufficiently large $n$ and $j\in\{0,1\}$, define
\begin{align*}
f_{0,n,x}(y,w)&:=L_{n,x}(w),\\
f_{1,n,x}(y,w)&:=
\{y-m^L_{\lambda_n,h_n}(x)\}L_{n,x}(w),
\end{align*}
and
\[
Z_{j,n,x}
:=
f_{j,n,x}(Y,W)-\mathbb{P} f_{j,n,x}.
\]
There are deterministic sequences $v_{j,n}>0$ and $b_{j,n}>0$ such
that, for every integer $q\ge2$,
\[
\sup_{x\in\mathcal X}\E|Z_{j,n,x}|^q
\le
\frac{q!}{2}v_{j,n}^2b_{j,n}^{q-2},
\qquad j\in\{0,1\}.
\]

\item\label{unif:modulus}
For all sufficiently large $n$ and $j\in\{0,1\}$, there are
$\gamma_j\in(0,1]$, a nonnegative
integrable random variable $A_{j,n}=A_{j,n}(Y,W)$, and a deterministic
number $c_{j,n}>0$ such that, outside a single null set,
\[
|f_{j,n,x}(Y,W)-f_{j,n,z}(Y,W)|
\le
A_{j,n}(Y,W)\|x-z\|_{\mathcal B}^{\gamma_j}
\]
for every $x,z\in\mathcal X$, and $\mathbb{P} A_{j,n}\le c_{j,n}$.

\item\label{unif:tuning}
There is a sequence $\eta_n\downarrow0$ such that, with
\begin{align*}
M_n
&:=N(\eta_n,\mathcal X,\|\cdot\|_{\mathcal B}),\\
\ell_n
&:=\log(2M_n),\\
a_{jn}
&:=
v_{j,n}\sqrt{\frac{\ell_n}{n}}
+b_{j,n}\frac{\ell_n}{n}
+c_{j,n}\eta_n^{\gamma_j},
\qquad j\in\{0,1\},
\end{align*}
one has $\frac{a_{0n}}{d_n}\longrightarrow0$.
Here $N(\eta,\mathcal X,\|\cdot\|_{\mathcal B})$ denotes the smallest
cardinality of an $\eta$-net of $\mathcal X$.  It is finite because
$\mathcal X$ is totally bounded.
\end{enumerate}

The first part of Assumption \ref{unif:measurability} controls the geometric complexity of the index set $\mathcal X$. Total boundedness ensures that, for every $\delta>0$, $\mathcal X$ can be covered by finitely many balls of radius
$\delta$, or equivalently, $N(\delta,\mathcal X,\|\cdot\|_{\mathcal B})<\infty$, where $N(\delta,\mathcal X,\|\cdot\|_{\mathcal B})$ denotes the corresponding
covering number. This condition does not require $\mathcal X$ to be finite-dimensional, but ensures that it can be approximated by a finite collection of representative elements at any prescribed resolution, a property that is fundamental for uniform stochastic arguments. Related compactness and covering conditions are standard in uniform nonparametric estimation with functional or infinite-dimensional covariates; see, for example, \citet{ferraty2010rate}.  The integrability conditions on $L_{n,x}(W)$ and $YL_{n,x}(W)$ ensure that the weighted quantities defining the estimator and their population counterparts are well defined. This requirement is
particularly relevant when $L_{n,x}$ is obtained through an inverse construction, since inverse weights may otherwise become unstable or unbounded. Finally, the pointwise measurability and separability requirements
are standard empirical-process conditions; see, for example,
\citet{vanDerVaartWellner1996}. They ensure that suprema indexed by $\mathcal X$ are measurable random variables, allowing the convergence statements throughout the paper to be formulated in terms of ordinary probability rather than outer probability.

The Assumption \ref{unif:kernel-holder} imposes standard local regularity requirements for
kernel-based nonparametric regression. The boundedness and compact support of
the kernel ensure that estimation at a point $x$ is driven only by observations
lying within a local neighborhood of $x$, while preventing individual kernel
weights from becoming arbitrarily large. Conditions of this type are commonly
used in nonparametric regression with functional and infinite-dimensional
covariates; see, for example, \citet{ferratymasvieu2007}. The requirement
$D_{h_n}(x)>0$ guarantees that the predictor distribution places positive local
mass around each point at which estimation is performed, so that the
population-level kernel denominator is well defined. This condition is closely
related to the small-ball probability structure that plays a fundamental role
in nonparametric regression with infinite-dimensional covariates; see also
\citet{chowdhury2019nonparametric}. The integrability condition on $m(X)$ ensures
that the corresponding population numerators are finite. Finally, the local
H\"older condition controls the smoothness of the regression function uniformly
over $\mathcal X$. It formalizes the natural requirement that predictor values
that are close in the Banach-space norm should have similar conditional mean
responses and, in particular, provides the principal control on the local
smoothing bias of the kernel estimator.

The current Assumption \ref{unif:kernel-holder} is closely related to an earlier Assumption \ref{cond:m} imposed on the regression function $m$, but it is deliberately formulated in a weaker and more targeted form. In the earlier formulation, we assumed that there exist constants $\delta>0$, $\alpha\in(0,1]$, and $L_m<\infty$ such that $|m(z)-m(x)|
\leq
L_m\|z-x\|_{\mathcal B}^{\alpha}$, whenever $\|z-x\|_{\mathcal B}\leq\delta$, together with the global boundedness condition $|m(X)|\leq M_m$, almost surely for some finite constant $M_m$. The current formulation retains the same local H\"older-type smoothness requirement but imposes it only for $x\in\mathcal X$, namely, $|m(z)-m(x)|
\leq
L_m\|z-x\|_{\mathcal B}^{\alpha}$ where $x\in\mathcal X, \|z-x\|_{\mathcal B}\leq\delta$. This restriction is sufficient for the subsequent results because the estimation and uniform inference are carried out only over the target set $\mathcal X$. Hence, there is no need to impose the same local regularity condition around every point of the entire Banach space. In this sense, the current assumption is better aligned with the domain over which the theoretical results are established.

The Assumption \ref{unif:kernel-holder} also replaces the almost-sure boundedness of $m(X)$ by the weaker requirement $\mathbb E|m(X)|<\infty$. The relationship between the two assumptions is immediate: $|m(X)|\leq M_m$ almost surely implies $\mathbb E|m(X)|\leq M_m<\infty$, whereas the converse need not hold. Thus, almost-sure boundedness automatically implies the integrability condition, but integrability allows $m(X)$ to be unbounded as long as its first absolute moment is finite. Since the role of this condition in the present setting is primarily to guarantee that the population numerator defining $m_{h_n}(x)$ is finite, the weaker integrability assumption is sufficient and avoids imposing an unnecessarily restrictive global boundedness condition. Another important feature of Condition \ref{unif:kernel-holder} is that the same constants $\delta$, $L_m$, and $\alpha$ apply uniformly over $x\in\mathcal X$, yielding the uniform bias bound $|m(z)-m(x)|\le L_m h_n^\alpha$ whenever the kernel weight is nonzero. This uniformity is essential for the subsequent uniform convergence results.

Similar to Assumption \ref{cond:D}, Assumption \ref{unif:population-denominator} guarantees that the population denominator of the weighted estimator is well defined at every $x\in\mathcal X$, while $d_n=\inf_{x\in\mathcal X}D_{\lambda_n,h_n}^L(x)>0$ strengthens this requirement uniformly over the target set $\sX$. Thus, the effective local information available for estimation is prevented from degenerating at
any point of $\mathcal X$, which is essential for controlling the denominator in uniform convergence arguments.

The Assumption \ref{unif:modulus} imposes a uniform H\"older-type regularity requirement on the indexed random functions $f_{j,n,x}$, for $j\in\{0,1\}$. Specifically, it controls
their local oscillation in the index $x$ by a random envelope $A_{j,n}$ whose average magnitude is bounded by $c_{j,n}$. Conditions of this type are closely
related to the stochastic equicontinuity assumptions used in empirical-process theory and in the derivation of uniform convergence results for kernel-type estimators; see, for example, \citet{EinmahlMason2005, chernozhukov2014gaussian}. Together with the total boundedness of $\mathcal X$, this assumption allows the behavior of the empirical process over the entire index set to be controlled through a finite covering of $\mathcal X$ and bounds on the oscillations between nearby points. This condition should be distinguished from the H\"older condition imposed on the regression function $m$: the latter controls the deterministic smoothing bias, whereas the present condition controls the stochastic variation of the numerator- and denominator-type empirical processes and is therefore essential for the subsequent uniform convergence results.

The Assumption \ref{unif:tuning} balances the geometric complexity of the target set $\sX$, the
stochastic fluctuations of the empirical processes, and the stability of the
population denominator. The quantity
$M_n=N(\eta_n,\mathcal X,\|\cdot\|_{\mathcal B})$ measures the size of a finite
$\eta_n$-net of $\mathcal X$, while $\ell_n=\log(2M_n)$ represents the
corresponding complexity penalty. The first two terms in $a_{jn}$ control the
stochastic fluctuation over the finite net, whereas
$c_{j,n}\eta_n^{\gamma_j}$ controls the oscillation between nearby points
through the H\"older-type condition imposed above. Such covering-number and
entropy arguments are standard in uniform empirical-process and kernel
estimation theory; see, for example, \citet{EinmahlMason2005, van2011local}. Finally, the requirement
$a_{0n}/d_n\to0$ ensures that the uniform stochastic error of the denominator is
asymptotically negligible relative to its smallest population value. Thus,
this condition strengthens the earlier positivity requirement $d_n>0$ by
providing the rate separation needed for uniform stability of the ratio
estimator.

\section{Statistical inference and further methodological developments}
\label{sec:example}

This section illustrates how the proposed framework can be used to study several fundamental inferential problems and methodological extensions in nonparametric regression with Banach space-valued predictors with measurement error. 

\subsection{Pointwise inference}
\label{sec:cb-point}

Before stating the next theorem, we introduce some additional notation that will be used throughout the subsequent theoretical developments.

\begin{notation}
Fix $x\in\mathcal B$. Write $D_n^L=D_{\lambda_n,h_n}^L(x)=\E\{L_n(W)\}$, $m_n^L=m_{\lambda_n,h_n}^L(x)=\E\{YL_n(W)\}/D_n^L$, $\widehat D_n=\mathbb P_n L_n(W)$, and
\[
\widehat m_n=\frac{\mathbb P_n\{YL_n(W)\}}{\widehat D_n} \text{ whenever } \widehat D_n\ne0, \qquad \widehat m_n=0 \text{ otherwise}.
\]
Define the weighted residual $Q_n(Y,W)=\{Y-m_n^L\}L_n(W)$ and its variance $V_n=\E\{Q_n^2(Y,W)\}$; whenever $D_n^L>0$, set $\sigma_n^2=V_n/D_n^L$, so $V_n=D_n^L\sigma_n^2$. The quantities $L_n$, $D_n^L$, $m_n^L$, $\sigma_n$ depend on $x$, suppressed for simplicity.
\end{notation}

The following theorem establishes the pointwise asymptotic normality of the proposed estimator at a fixed $x\in \sX$, providing the basis for pointwise inference on $m(x)$.

\begin{theorem}[Pointwise asymptotic normality]
\label{thm:pointwise-asymptotic-normality}
Suppose that $\{(Y_i,W_i)\}_{i=1}^n$ are independent and identically distributed copies of $(Y,W)$ for each $n$. Fix nonrandom sequences $h_n,\lambda_n\to 0$, and let $L_n$ denote the corresponding inverse weight. Assume that, for all sufficiently large $n$, $D_n^L>0, 0<\sigma_n^2<\infty$. Suppose further that: 
\begin{enumerate}
    \item[(i)] $\widehat D_n$ is consistent for $D_n^L$: as $n\to\infty$,
\begin{equation*}
\frac{\operatorname{Var}\{L_n(W)\}}{n(D_n^L)^2}\longrightarrow 0;
\end{equation*}
\item[(ii)] for every $\varepsilon>0$, the Lindeberg condition holds:
\begin{equation}
\label{eq:lindeberg-condition}
\frac{
\E\!\left[
Q_n^2(Y,W) \mathbf 1\!\left\{ |Q_n(Y,W)| > \varepsilon\sqrt{nD_n^L}\,\sigma_n
\right\}
\right]
}{D_n^L\sigma_n^2}
\longrightarrow0.
\end{equation}
\end{enumerate}
Then
\begin{equation}
\label{eq:centered-clt}
\frac{\sqrt{nD_n^L}}{\sigma_n}
\{\widehat m_n-m_n^L\}
\overset{d}{\longrightarrow}N(0,1).
\end{equation}
If, in addition, 
\begin{equation}\label{eq:standardized-bias}
    \dfrac{\sqrt{nD_n^L}}{\sigma_n} \{m_n^L-m(x)\} \longrightarrow0,
\end{equation}
 then
\begin{equation}
\label{eq:true-target-clt}
\frac{\sqrt{nD_n^L}}{\sigma_n}
\{\widehat m_n-m(x)\}
\overset{d}{\longrightarrow}N(0,1).
\end{equation}
\end{theorem}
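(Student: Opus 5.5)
The plan is to linearize the ratio estimator around its regularized target and then apply the Lindeberg--Feller central limit theorem to a triangular array. On the event $\{\widehat D_n\neq 0\}$, the key algebraic identity is
\[
\widehat m_n-m_n^L=\frac{\mathbb P_n\{YL_n(W)\}-m_n^L\,\mathbb P_n L_n(W)}{\widehat D_n}=\frac{\mathbb P_n Q_n(Y,W)}{\widehat D_n}.
\]
By the definition of $m_n^L$, $\E\{Q_n(Y,W)\}=\E\{YL_n(W)\}-m_n^L D_n^L=0$. Hence
\[
\frac{\sqrt{nD_n^L}}{\sigma_n}\{\widehat m_n-m_n^L\}=\frac{D_n^L}{\widehat D_n}\cdot S_n,\qquad S_n:=\frac{1}{\sqrt{n}\,\sqrt{V_n}}\sum_{i=1}^n Q_n(Y_i,W_i),
\]
using $\sqrt{nD_n^L}/\sigma_n\cdot 1/D_n^L=\sqrt n/\sqrt{V_n}$ with $V_n=D_n^L\sigma_n^2$. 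The proof then has two steps: show $S_n\overset{d}{\longrightarrow}N(0,1)$, and show $\widehat D_n/D_n^L\overset{\bbP}{\longrightarrow}1$. Slutsky's lemma then finishes \eqref{eq:centered-clt}.

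For the denominator, Chebyshev's inequality gives
\[
\bbP\{|\widehat D_n/D_n^L-1|>\epsilon\}\le \operatorname{Var}\{L_n(W)\}/\{n(D_n^L)^2\epsilon^2\},
\]
which tends to $0$ by (i). In particular $\bbP\{\widehat D_n\ne0\}\to1$, so the convention $\widehat m_n=0$ on $\{\widehat D_n=0\}$ only affects an event of vanishing probability. This is absorbed by writing the normalized error as $(D_n^L/\widehat D_n)S_n$ on that event plus a term that is $o_{\bbP}(1)$.

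For $S_n$, write $\xi_{n,i}=Q_n(Y_i,W_i)/\sqrt{nV_n}$, $i=1,\dots,n$. These form a row-wise i.i.d.\ triangular array with mean zero and $\sum_i\E\xi_{n,i}^2=1$. The Lindeberg quantity is
\[
\sum_i\E[\xi_{n,i}^2\mathbf 1\{|\xi_{n,i}|>\varepsilon\}]=\E[Q_n^2\mathbf 1\{|Q_n|>\varepsilon\sqrt{nV_n}\}]/V_n.
\]
Since $\sqrt{nV_n}=\sqrt{nD_n^L}\,\sigma_n$ and $V_n=D_n^L\sigma_n^2$, this is exactly the expression in \eqref{eq:lindeberg-condition}, which tends to $0$ by (ii). The Lindeberg--Feller theorem therefore yields $S_n\overset{d}{\longrightarrow}N(0,1)$.

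The only genuine care needed is bookkeeping: matching the normalizations so that the Lindeberg quantity coincides with (ii), and handling the event $\{\widehat D_n=0\}$. I expect no substantive obstacle, since the hypotheses are tailored to these two steps. For \eqref{eq:true-target-clt}, decompose
\[
\frac{\sqrt{nD_n^L}}{\sigma_n}\{\widehat m_n-m(x)\}=\frac{\sqrt{nD_n^L}}{\sigma_n}\{\widehat m_n-m_n^L\}+\frac{\sqrt{nD_n^L}}{\sigma_n}\{m_n^L-m(x)\}.
\]
The second term is deterministic and tends to $0$ by \eqref{eq:standardized-bias}, so Slutsky's lemma again gives the standard normal limit.
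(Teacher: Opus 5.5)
Your proposal is correct and follows the paper's proof essentially step for step. Both use the exact identity $\widehat m_n-m_n^L=\mathbb P_nQ_n/\widehat D_n$, Chebyshev under (i) to get $\widehat D_n/D_n^L\to1$ in probability, Lindeberg--Feller on the normalized sum (whose Lindeberg quantity is exactly (ii)), and Slutsky twice. Your explicit handling of the event $\{\widehat D_n=0\}$, which the paper passes over silently, adds a welcome bit of rigor.
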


The pointwise asymptotic normality established in Theorem~\ref{thm:pointwise-asymptotic-normality}
provides a direct basis for statistical inference on the latent regression function at a fixed covariate value $x\in\mathcal B$. This is analogous in spirit to
pointwise inference in classical nonparametric regression and in nonparametric regression with functional covariates; see, for example,
\citet{ferratyvieu2006, ferratymasvieu2007}. The distinction in the present setting is that the predictor is allowed to take values in a general
Banach space and is not observed directly. Thus, the inferential target is the latent conditional mean $m(x)= \mathbb E\{Y\mid X=x\}$, rather than the regression function based on the contaminated predictor $W$.

For a fixed $x\in\mathcal B$, consider the pointwise hypothesis
\begin{equation}
\label{eq:pointwise-null}
    H_0:m(x)=m_0
    \qquad\text{versus}\qquad
    H_1:m(x)\neq m_0,
\end{equation}
where $m_0\in\mathbb R$ is a prespecified reference value. The formulation
\eqref{eq:pointwise-null} includes, as special cases, testing whether the
conditional mean vanishes at $x$, whether it equals a scientifically specified
reference level, or whether it equals a benchmark value obtained from an
external model or prior scientific knowledge. Theorem~\ref{thm:pointwise-asymptotic-normality} implies that, under the additional
negligibility condition
\[
    \frac{\sqrt{nD_n^L}}{\sigma_n}
    \{m_n^L-m(x)\}\longrightarrow 0,
\]
we have
\[
    \frac{\sqrt{nD_n^L}}{\sigma_n}
    \{\widehat m_n-m(x)\}
    \overset{d}{\longrightarrow}N(0,1).
\]
Consequently, under $H_0$,
\[
    \frac{\sqrt{nD_n^L}}{\sigma_n}
    \{\widehat m_n-m_0\}
    \overset{d}{\longrightarrow}N(0,1).
\]
To obtain a feasible test, it remains to replace the population normalizing
quantities by consistent estimators. In addition to the consistency of
$\widehat D_n$ established in Theorem~\ref{thm:pointwise-asymptotic-normality}, suppose
that an estimator $\widehat\sigma_n^2$ is available such that
\begin{equation}
\label{eq:variance-consistency}
    \frac{\widehat\sigma_n^2}{\sigma_n^2}
    \overset{\bbP}{\longrightarrow}1.
\end{equation}
As an immediate consequence of Theorem~\ref{thm:pointwise-asymptotic-normality} and Slutsky's theorem, we obtain the result below.

\begin{corollary}[Pointwise hypothesis testing]
\label{cor:pointwise-test}
Fix $x\in\mathcal B$ and $m_0\in\mathbb R$. Suppose that the conditions of
Theorem~\ref{thm:pointwise-asymptotic-normality} hold, including
\[
    \frac{\sqrt{nD_n^L}}{\sigma_n}
    \{m_n^L-m(x)\}\longrightarrow 0,
\]
and suppose further that \eqref{eq:variance-consistency} holds. Define $T_n(x;m_0)
    =
    \frac{\sqrt{n\widehat D_n}}
    {\widehat\sigma_n}
    \{\widehat m_n-m_0\}.$
Then, under the null hypothesis $H_0:m(x)=m_0$, $T_n(x;m_0)
    \overset{d}{\longrightarrow}N(0,1).$
Therefore, an asymptotic level-$\alpha$ test of
\eqref{eq:pointwise-null} rejects $H_0$ whenever $\left|T_n(x;m_0)\right|
    >
    z_{1-\alpha/2}$, 
where $z_q$ denotes the $q$th quantile of the standard normal distribution.
Equivalently, the test has asymptotic size $\alpha$, in the sense that $\bbP_{H_0}
    \left\{
        \left|T_n(x;m_0)\right|>z_{1-\alpha/2}
    \right\}
    \longrightarrow \alpha$.
\end{corollary}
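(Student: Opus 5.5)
The plan is to rewrite $T_n(x;m_0)$ as the oracle statistic from Theorem~\ref{thm:pointwise-asymptotic-normality} multiplied by a random factor tending to one in probability, and then apply Slutsky's theorem. Under $H_0$ we have $m(x)=m_0$, and on the event $\{\widehat D_n>0\}$ we can write
\[
T_n(x;m_0)=\underbrace{\frac{\sqrt{nD_n^L}}{\sigma_n}\{\widehat m_n-m(x)\}}_{=:S_n}\cdot\underbrace{\sqrt{\frac{\widehat D_n}{D_n^L}}\cdot\sqrt{\frac{\sigma_n^2}{\widehat\sigma_n^2}}}_{=:\rho_n}.
\]
The assumed conditions (i), (ii) and the standardized-bias condition \eqref{eq:standardized-bias} are exactly the hypotheses of \eqref{eq:true-target-clt}, so $S_n\overset{d}{\longrightarrow}N(0,1)$. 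It therefore remains to show $\rho_n\overset{\bbP}{\longrightarrow}1$.

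\textbf{Step 1: the denominator ratio.} I first show $\widehat D_n/D_n^L\overset{\bbP}{\longrightarrow}1$. By Chebyshev's inequality, for every $\varepsilon>0$,
\[
\bbP\{|\widehat D_n/D_n^L-1|>\varepsilon\}\le \frac{\operatorname{Var}\{L_n(W)\}}{n(D_n^L)^2\varepsilon^2}.
\]
The right-hand side tends to zero by condition (i). Since $D_n^L>0$ for large $n$, this also gives $\bbP\{\widehat D_n>0\}\to1$. Hence $\sqrt{n\widehat D_n}$ is well defined and real with probability tending to one, and on the complement event $T_n$ may be defined arbitrarily without affecting convergence in distribution.

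\textbf{Step 2: the variance ratio.} Assumption \eqref{eq:variance-consistency} gives $\sigma_n^2/\widehat\sigma_n^2\overset{\bbP}{\longrightarrow}1$, by the continuous mapping theorem applied to $t\mapsto1/t$ at $t=1$. Applying continuous mapping again with the square root gives $\rho_n\overset{\bbP}{\longrightarrow}1$.

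\textbf{Step 3: conclusion.} Slutsky's theorem then yields $T_n(x;m_0)=S_n\rho_n+o_{\bbP}(1)\overset{d}{\longrightarrow}N(0,1)$ under $H_0$. Because the standard normal distribution function is continuous, the portmanteau theorem gives
\[
\bbP_{H_0}\{|T_n|>z_{1-\alpha/2}\}\to\bbP\{|Z|>z_{1-\alpha/2}\}=\alpha.
\]

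\textbf{Expected obstacle.} The only real subtlety is the event where $\widehat D_n\le0$ (inverse weights may be negative) or $\widehat\sigma_n=0$, on which $T_n$ is undefined. I will handle this by showing that these events have vanishing probability: the first by Step 1, the second by \eqref{eq:variance-consistency} together with $\sigma_n^2>0$. Such events do not affect limits in distribution. All other steps are routine.
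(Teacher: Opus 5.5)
Your proposal is correct and follows the paper's argument: the paper presents the corollary as an immediate consequence of Theorem~\ref{thm:pointwise-asymptotic-normality} together with $\widehat D_n/D_n^L\to1$ (from condition (i), as in that theorem's proof), \eqref{eq:variance-consistency}, and Slutsky's theorem. Your explicit treatment of the events $\{\widehat D_n\le 0\}$ and $\{\widehat\sigma_n=0\}$, and the portmanteau step for the size statement, supply details the paper leaves implicit.
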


The same result yields an asymptotic pointwise confidence interval for $m(x)$.
Specifically, a $100(1-\alpha)\%$ pointwise confidence interval is
\begin{equation}
\label{eq:pointwise-ci}
    \left[
    \widehat m_n
    -
    z_{1-\alpha/2}
    \frac{\widehat\sigma_n}{\sqrt{n\widehat D_n}},
    \;
    \widehat m_n
    +
    z_{1-\alpha/2}
    \frac{\widehat\sigma_n}{\sqrt{n\widehat D_n}}
    \right].
\end{equation}
Thus, the pointwise confidence interval and the test in
Corollary~\ref{cor:pointwise-test} are asymptotically dual, i.e., the null hypothesis
$H_0:m(x)=m_0$ is rejected at level $\alpha$ if and only if $m_0$ lies outside
the confidence interval in \eqref{eq:pointwise-ci}.

\subsubsection{Consistency under fixed alternatives}
The preceding test is also consistent against fixed alternatives under the
natural effective-rate condition. To see this, suppose that $m(x)=m_0+\delta$ for $\delta\neq 0.$ Then
\begin{align}
T_n(x;m_0)
&=
\frac{\sqrt{n\widehat D_n}}{\widehat\sigma_n}
\{\widehat m_n-m(x)\}
+
\frac{\sqrt{n\widehat D_n}}{\widehat\sigma_n}
\{m(x)-m_0\}
\nonumber\\
&=
\frac{\sqrt{n\widehat D_n}}{\widehat\sigma_n}
\{\widehat m_n-m(x)\}
+
\frac{\sqrt{n\widehat D_n}}{\widehat\sigma_n}\delta .
\label{eq:test-fixed-alt-decomp}
\end{align}
The first term in \eqref{eq:test-fixed-alt-decomp} is asymptotically standard
normal, whereas the magnitude of the second term diverges whenever
\begin{equation}
\label{eq:fixed-alt-rate}
    \frac{\sqrt{nD_n^L}}{\sigma_n}|\delta|
    \longrightarrow\infty.
\end{equation}
Consequently, $|T_n(x;m_0)|
    \overset{\bbP}{\longrightarrow}\infty,$
and hence $\bbP_{H_1}
    \left\{
        |T_n(x;m_0)|>z_{1-\alpha/2}
    \right\}
    \longrightarrow1.$
Condition \eqref{eq:fixed-alt-rate} highlights the effective pointwise
signal-to-noise scale of the problem. In particular, the ability to
distinguish $m(x)$ from $m_0$ is governed by the quantity $\frac{\sigma_n}{\sqrt{nD_n^L}}$, which incorporates both the local concentration of the latent covariate
through $D_n^L$ and the variability induced by the inverse weighting through
$\sigma_n$.

\subsubsection{Local alternatives and asymptotic power}
A more refined characterization of the sensitivity of the test is obtained by
considering alternatives approaching the null at the natural pointwise
estimation rate. For a fixed $c\in\mathbb R$, consider the sequence of local
alternatives
\begin{equation}
\label{eq:local-alt}
    H_{1,n}:
    m(x)
    =
    m_0+
    c\frac{\sigma_n}{\sqrt{nD_n^L}}.
\end{equation}
Under \eqref{eq:local-alt}, $\frac{\sqrt{nD_n^L}}{\sigma_n}
    \{m(x)-m_0\}
    =c.$
It follows from Theorem~\ref{thm:pointwise-asymptotic-normality},
the consistency of $\widehat D_n$ and $\widehat\sigma_n$, and Slutsky's theorem
that $ T_n(x;m_0)
    \overset{d}{\longrightarrow}N(c,1).$
Therefore, the limiting power of the two-sided level-$\alpha$ test is
\begin{align}
    \pi_\alpha(c)
    &=
    \Pr\left(
        |Z+c|>z_{1-\alpha/2}
    \right)
    \nonumber\\
    &=
    1-\Phi\left(z_{1-\alpha/2}-c\right)
    +
    \Phi\left(-z_{1-\alpha/2}-c\right),
\label{eq:local-power}
\end{align}
where $Z\sim N(0,1)$ and $\Phi$ denotes the standard normal distribution
function. Thus, the quantity $\frac{\sigma_n}{\sqrt{nD_n^L}}$ also determines the local separation rate at which alternatives can be
distinguished nontrivially from the null.

\subsubsection{Testing scientifically relevant deviations}
In some applications, testing exact equality to a reference value may be less
informative than determining whether the departure from that reference exceeds
a practically meaningful threshold. Let $\Delta>0$ denote such a threshold.
For example, a one-sided relevant-effect hypothesis can be formulated as
\begin{equation}
\label{eq:relevant-test}
    H_0:m(x)-m_0\leq\Delta
    \qquad\text{versus}\qquad
    H_1:m(x)-m_0>\Delta.
\end{equation}
The corresponding statistic is $ T_{n,\Delta}^{+}(x;m_0)
    =
    \frac{\sqrt{n\widehat D_n}}
    {\widehat\sigma_n}
    \{\widehat m_n-m_0-\Delta\}.$
At the least favorable boundary $m(x)=m_0+\Delta$, $T_{n,\Delta}^{+}(x;m_0)
\overset{d}{\longrightarrow}N(0,1)$, so that an asymptotic level-$\alpha$ test rejects
\eqref{eq:relevant-test} whenever $T_{n,\Delta}^{+}(x;m_0)>z_{1-\alpha}.$
Relevant hypotheses of this type have been advocated in functional-data
problems because they distinguish statistically detectable departures from
departures that exceed a prespecified practically relevant magnitude; see,
for example, \citet{dette2020functional}.

\begin{remark}[Interpretation in the Banach-space measurement-error setting]
\label{rem:pointwise-interpretation}
The preceding hypotheses concern the latent regression function $m(x)=\mathbb E\{Y\mid X=x\}$ evaluated at a fixed $x\in\mathcal B$. In particular, they do not concern the
conditional mean based on the contaminated predictor,
$\mathbb E\{Y\mid W=x\}$. This distinction is important because the inverse
weights used in $\widehat m_n$ are introduced precisely to recover information
about the regression relationship indexed by the unobserved $X$.

Moreover, no Hilbert-space representation, basis expansion, or finite-dimensional
projection of $x$ is required in the formulation of
\eqref{eq:pointwise-null}. The hypothesis itself is scalar, but the location at
which it is evaluated is an element of the potentially infinite-dimensional
Banach space $\mathcal B$. The geometry of $\mathcal B$ enters through the
localization mechanism and the corresponding quantity $D_n^L$, whereas the
inverse problem enters through the inverse weights and their contribution to
$\sigma_n$. In this sense, the test provides local inference directly on the
Banach-space regression surface.
\end{remark}

\begin{remark}[Relation to existing functional-regression tests]
\label{rem:testing-literature}
The pointwise problem considered here is distinct from the global structural
testing problems more commonly considered in the functional-regression
literature. For example, \citet{delsol2011structural} develop structural tests
for regression models with functional explanatory variables, whereas other
approaches test the global significance or no-effect of a functional covariate,
often through dimension reduction or projection arguments. The present
inference instead concerns the value of the latent conditional mean at a fixed
$x\in\mathcal B$ and follows directly from the pointwise asymptotic
normality of the inverse-weighted estimator. This local inference therefore
complements, rather than replaces, global goodness-of-fit or covariate-effect
tests. A simultaneous or global test over a set
$\mathcal X\subset\mathcal B$ would require stronger uniform approximations
than the pointwise limit theory used here.
\end{remark}

\subsection{Simultaneous inference}
\label{sec:cb-uniform}

The preceding results characterize the behavior of the proposed estimator at a fixed target point \(x\). We now strengthen this analysis by studying its behavior uniformly over a prescribed subset of the predictor space. Specifically, we derive a uniform bound for the smoothing and inverse-approximation bias and establish a uniform stochastic convergence rate for \(\widehat m(x)-m(x)\). These results quantify the maximal estimation error over the region of interest and provide the theoretical foundation for simultaneous inference, including uniform confidence bands and global hypothesis tests for the regression function.

\begin{theorem}[Uniform bias decomposition and uniform rate]
\label{thm:uniform-bias}
Suppose Conditions~\ref{unif:measurability}--\ref{unif:tuning} hold.
Define $\rho_n
:=
\sup_{x\in\mathcal X}
\left|
m^L_{\lambda_n,h_n}(x)-m_{h_n}(x)
\right|,$
and suppose $\rho_n<\infty$ for all sufficiently large $n$.  Then, for
every sufficiently large $n$ for which $h_n\le\delta$,
\begin{equation}\label{eq:uniform-population-bias}
\sup_{x\in\mathcal X}
\left|
m^L_{\lambda_n,h_n}(x)-m(x)
\right|
\le
\rho_n+L_mh_n^\alpha.
\end{equation}
Moreover,
\begin{equation}\label{eq:uniform-empirical-denominator-positive}
\mathbb{P}\left\{
\inf_{x\in\mathcal X}\widehat D_n^L(x)
\ge\frac{d_n}{2}
\right\}
\longrightarrow1,
\end{equation}
and
\begin{equation}\label{eq:uniform-total-rate}
\sup_{x\in\mathcal X}
\left|
\widehat m_{\lambda_n,h_n}(x)-m(x)
\right|
=
O_{\mathbb{P}}\!\left(
\rho_n+h_n^\alpha+\frac{a_{1n}}{d_n}
\right).
\end{equation}
In particular, if $\rho_n+h_n^\alpha+\frac{a_{1n}}{d_n}
\longrightarrow0,$ then $\widehat m_{\lambda_n,h_n}$ is uniformly consistent for $m$ on
$\mathcal X$.
\end{theorem}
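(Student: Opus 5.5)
The argument has three parts, in order: a deterministic bias bound, a uniform maximal inequality for the two centred empirical processes, and a ratio expansion.

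\textbf{Bias bound.} I split $m^L_{\lambda_n,h_n}(x)-m(x)=\{m^L_{\lambda_n,h_n}(x)-m_{h_n}(x)\}+\{m_{h_n}(x)-m(x)\}$. The first piece is at most $\rho_n$ by definition. For the second, fix $x\in\mathcal X$ and $h_n\le\delta$. Since $D_{h_n}(x)>0$ by \ref{unif:kernel-holder}, we can write
\[
m_{h_n}(x)-m(x)=\frac{\E[\{m(X)-m(x)\}K_{h_n,x}(X)]}{D_{h_n}(x)} .
\]
On the support of $K_{h_n,x}$ we have $\|X-x\|_{\mathcal B}\le h_n\le\delta$. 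The uniform H\"older bound then gives $|m(X)-m(x)|\le L_mh_n^\alpha$ there, and because $K\ge0$ the whole ratio is at most $L_mh_n^\alpha$. The constants do not depend on $x$, so taking the supremum gives \eqref{eq:uniform-population-bias}.

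\textbf{Uniform maximal inequality.} I would prove that
\[
\sup_{x\in\mathcal X}|(\mathbb P_n-\mathbb P)f_{j,n,x}|=O_{\mathbb P}(a_{jn}),\qquad j=0,1.
\]
Take an $\eta_n$-net $\{x_1,\dots,x_{M_n}\}$ of $\mathcal X$. For every $x$ choose a net point $x_k$ with $\|x-x_k\|\le\eta_n$, and decompose
\[
(\mathbb P_n-\mathbb P)f_{j,n,x}=(\mathbb P_n-\mathbb P)f_{j,n,x_k}+(\mathbb P_n-\mathbb P)(f_{j,n,x}-f_{j,n,x_k}).
\]
For the net term, the moment condition \ref{unif:bernstein} is exactly what Bernstein's inequality needs. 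It gives $\bbP\{|(\mathbb P_n-\mathbb P)f_{j,n,x_k}|>t\}\le2\exp\{-nt^2/(2v_{j,n}^2+2b_{j,n}t)\}$. A union bound over the $M_n$ points with $t=C(v_{j,n}\sqrt{\ell_n/n}+b_{j,n}\ell_n/n)$ makes the probability small for large $C$, because $\ell_n=\log(2M_n)$.

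For the oscillation term, \ref{unif:modulus} bounds it uniformly in $x$ by $(\mathbb P_n A_{j,n}+\mathbb P A_{j,n})\eta_n^{\gamma_j}$. By Markov's inequality this is $O_{\mathbb P}(c_{j,n}\eta_n^{\gamma_j})$. One subtlety for $j=1$: $f_{1,n,x}$ contains $m^L(x)$, which depends on $x$. I therefore read \ref{unif:modulus} as covering that dependence, and otherwise use it as stated. Measurability of the suprema comes from \ref{unif:measurability}.

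\textbf{Denominator and ratio.} For $j=0$, $\mathbb P f_{0,n,x}=D^L_{\lambda_n,h_n}(x)\ge d_n$ by \ref{unif:population-denominator}. Hence
\[
\inf_x\widehat D_n^L(x)\ge d_n-O_{\mathbb P}(a_{0n}),
\]
and since $a_{0n}/d_n\to0$ by \ref{unif:tuning} this yields \eqref{eq:uniform-empirical-denominator-positive}. On that event the estimator is a genuine ratio. The key identity is
\[
\widehat m(x)-m^L(x)=\frac{\mathbb P_n f_{1,n,x}}{\widehat D_n^L(x)},
\]
which holds because $\mathbb P_n\{(Y-m^L)L\}=\widehat N-m^L\widehat D$. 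Moreover $\mathbb P f_{1,n,x}=N^L-m^LD^L=0$, so the numerator is exactly $(\mathbb P_n-\mathbb P)f_{1,n,x}$. Therefore
\[
\sup_x|\widehat m-m^L|\le\frac{2\,O_{\mathbb P}(a_{1n})}{d_n}
\]
with probability tending to one. Combining this with the bias bound gives \eqref{eq:uniform-total-rate}, and uniform consistency follows immediately.

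\textbf{Main obstacle.} The delicate step is the maximal inequality: tuning the Bernstein union bound correctly, and handling the oscillation term when the centring $m^L(x)$ varies with $x$. I would first check that \ref{unif:modulus} is meant to include this dependence. If it is not, I would bound $|m^L(x)-m^L(z)|\,|\mathbb P_nL_{n,x}|$ separately.
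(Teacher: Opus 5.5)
Your proposal is correct and follows essentially the same route as the paper's proof. Both use an $\eta_n$-net with Bernstein's inequality and a union bound (the paper sets the threshold $t_{j,n}(u)$ with $s_n(u)=\ell_n+u$ to get tail $e^{-u}$, which is equivalent to your large constant $C$), then Markov's inequality on the oscillation term from \ref{unif:modulus}, the denominator event $\{\Delta_{0,n}\le d_n/2\}$, and the exact ratio identity combined with the deterministic bias bound. Your concern about the $x$-dependent centering is unnecessary, because \ref{unif:modulus} is imposed directly on $f_{1,n,x}=\{y-m^L_{\lambda_n,h_n}(x)\}L_{n,x}(w)$, and the paper uses it in exactly that way.
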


\noindent The term $\rho_n$ can be expressed through the residuals as follows:

\begin{equation}
m^L_{\lambda_n,h_n}(x) - m_{h_n}(x)
= \frac{r_{1,\lambda_n,h_n}(x) - m_{h_n}(x)\, r_{0,\lambda_n,h_n}(x)}{D^L_{\lambda_n,h_n}(x)}.
\label{eq:uniform-exact-inverse-bias}
\end{equation}
Under Condition~\ref{unif:kernel-holder}, $\sup_{x \in \mathcal{X}}|m_{h_n}(x)|$ is bounded. Hence there exists a constant $C< \infty$ such that 
\[\rho_n \le \frac{\sup_{x \in \mathcal{X}}|r_{1,\lambda_n,h_n}(x)|+C\, \sup_{x \in \mathcal{X}}|r_{0,\lambda_n,h_n}(x)|}{d_n},\]
which is a condition that can be verified.
In addition to Assumptions~\ref{unif:measurability}--\ref{unif:tuning}, assume the following conditions. These conditions are high-level and do not follow from Conditions~\ref{unif:measurability}--\ref{unif:tuning}. We first introduce some notations. 
\begin{notation}
Restoring the dependence on $x$ in the pointwise notation, write
\[
m_n^L(x):=m^L_{\lambda_n,h_n}(x),\qquad
\widehat m_n(x):=\widehat m_{\lambda_n,h_n}(x).
\]
Let $f_{n,x}:=f_{1,n,x}$, with $f_{1,n,x}$ as defined in
Condition~\ref{unif:bernstein}. Thus
$f_{n,x}(y,w)=\{y-m_n^L(x)\}L_{n,x}(w)$ and
$\mathbb P f_{n,x}=0$. Define
\[
s_n^2(x):=\mathbb P f_{n,x}^2,\qquad
s_n(x):=\sqrt{s_n^2(x)},\qquad
\psi_{n,x}:=\frac{f_{n,x}}{s_n(x)}
\quad\text{when }s_n(x)>0.
\]
In the earlier pointwise notation,
$s_n^2(x)=D_n^L(x)\sigma_n^2(x)$.

For the given inverse weights, use
\[
\widehat s_n^2(x):=\frac1n\sum_{i=1}^n
\{Y_i-\widehat m_n(x)\}^2L_{n,x}^2(W_i),\qquad
\widehat s_n(x):=\sqrt{\widehat s_n^2(x)}.
\]
For a square-integrable function $g$, let
$\mathbb G_n g:=\sqrt n(\mathbb P_n g-\mathbb P g)$.
Write $\ell^\infty(\mathcal X)$ for the bounded real-valued
functions on $\mathcal X$, equipped with
$\|a\|_{\mathcal X}:=\sup_{x\in\mathcal X}|a(x)|$.
In particular,
$\|\mathbb G_n\psi_{n,\cdot}\|_{\mathcal X}
=\sup_{x\in\mathcal X}|\mathbb G_n\psi_{n,x}|$.
\end{notation}

We are now ready to introduce the other conditions: 
\begin{enumerate}[label=\textup{(U\arabic*)},start=7,leftmargin=2.4em]

\item\label{unif:gaussian-coupling}
For all sufficiently large $n$, $\inf_{x\in\mathcal X}s_n(x)>0.$ Moreover, there exists a tight Gaussian random variable $\mathbb{Z}_n^{G} \in \ell^{\infty}(\mathcal{X})$ with the following covariance function:
\begin{equation*}\label{eq:uniform-Gaussian-coupling}
\operatorname{Cov}
\{\mathbb Z_n^G(x),\mathbb Z_n^G(z)\}
=
\mathbb P\{\psi_{n,x}(Y,W)\psi_{n,z}(Y,W)\},
\qquad x,z\in\mathcal X.
\end{equation*}

\noindent Define $\mathfrak m_n:=1+\mathbb E\left\|\mathbb Z_n^G\right\|_{\mathcal X}.$ We assume that 
\begin{equation}
\left|
\left\|
\mathbb G_n\psi_{n,\cdot}
\right\|_{\mathcal X}
-
\left\|
\mathbb Z_n^G
\right\|_{\mathcal X}
\right|
=
o_{\mathbb P}(\mathfrak m_n^{-1}).
\end{equation}

\item\label{unif:undersmoothing}
The regularization and smoothing bias is negligible:
\begin{equation}
\mathfrak m_n
\sup_{x\in\mathcal X}
\frac{\sqrt n\,D_n^L(x)}{s_n(x)}
\left|
m_n^L(x)-m(x)
\right|
\longrightarrow0.
\label{eq:uniform-undersmoothing}
\end{equation}
\item\label{unif:big} The following two approximation conditions hold: 
\begin{enumerate}[label=\textup{(\alph*)},leftmargin=2.0em]
    \item The estimated score variance is uniformly consistent:
\begin{equation}
\mathfrak m_n^2 \sup_{x\in\mathcal X} \left| \frac{\widehat s_n^2(x)}{s_n^2(x)}-1\right| \overset{\mathbb P}{\longrightarrow}0.
\label{eq:uniform-score-variance-consistency}
\end{equation}

\item Let $\xi_1,\ldots,\xi_n$ be independent standard normal random
variables, independent of the data, and define
\begin{equation}
\widehat{\mathbb Z}_n^\xi(x)
:=
\frac{1}{\widehat s_n(x)\sqrt n}
\sum_{i=1}^n
\xi_i
\{Y_i-\widehat m_n(x)\}
L_{n,x}(W_i).
\label{eq:uniform-multiplier-process}
\end{equation}
Writing
\[
\mathcal D_n
:=
\sigma\{(Y_i,W_i):1\le i\le n\},
\]
the conditional distribution of the multiplier supremum satisfies
\begin{equation}
\sup_{t\in\mathbb R} \left| \mathbb P_\xi\left\{\left\| \widehat{\mathbb Z}_n^\xi \right\|_{\mathcal X}
\le t \,\middle|\, \mathcal D_n \right\} - \mathbb P \left\{ \left\|\mathbb Z_n^G \right\|_{\mathcal X}
\le t \right\} \right| \overset{\mathbb P}{\longrightarrow}0.
\label{eq:uniform-conditional-multiplier-approximation}
\end{equation}

\end{enumerate}
\end{enumerate}

Condition \ref{unif:gaussian-coupling} provides a uniform Gaussian approximation to the centered and standardized score process. The lower bound on \(s_n(x)\) prevents local degeneracy of the variance, while the coupling condition ensures that the supremum of the empirical process can be approximated by the supremum of a tight Gaussian process with the same covariance structure.  
Condition \ref{unif:undersmoothing} is a uniform undersmoothing condition. It requires the combined smoothing and regularization bias to be negligible relative to the stochastic scale of the studentized process, uniformly over \(x\in\mathcal X\). Such bias-negligibility conditions are standard in the construction of asymptotically valid uniform confidence bands, where the deterministic approximation error must vanish faster than the fluctuations governing the supremum statistic. Condition \ref{unif:big}(a) guarantees that studentization based on the estimated score variance is asymptotically innocuous, while Condition \ref{unif:big}(b) establishes the validity of the Gaussian multiplier bootstrap used to estimate critical values for the supremum distribution. Conditions of this general type are standard in modern uniform inference based on Gaussian approximation and multiplier bootstrap methods; see \citet{chernozhukov2014gaussian, CHERNOZHUKOV20163632}
Closely related ideas have been used specifically for uniform confidence bands in nonparametric errors-in-variables regression; see \citet{Kato2019uniform}.

\begin{remark}
By Theorem~\ref{thm:uniform-bias},
$\sup_{x\in\mathcal X} |m_n^L(x)-m(x)| \le \rho_n+L_mh_n^\alpha.$
Consequently, a sufficient condition for
\eqref{eq:uniform-undersmoothing} is
\begin{equation*}
\mathfrak m_n\sqrt n \left\{ \sup_{x\in\mathcal X} \frac{D_n^L(x)}{s_n(x)} \right\} \left(\rho_n+L_mh_n^\alpha
\right)\longrightarrow0.
\label{eq:primitive-uniform-undersmoothing}
\end{equation*}
This condition requires the combined regularization and smoothing bias to be asymptotically negligible relative to the simultaneous stochastic width of the estimator.
\end{remark}
\begin{remark}
    Condition \ref{unif:gaussian-coupling} is a high-level Gaussian-coupling condition for the standardized score class $\{\psi_{n,x}: x \in \mathcal{X}\}$. Condition \ref{unif:big}(b) is the high-level analogue of the conditional multiplier-bootstrap approximation proved in \cite[Theorem~3.2]{Kato2019uniform}. Their result concerns the deconvolution-kernel score process indexed by a subset of $\mathbb{R}$. In our setting, the score class is generated by the regularized inverse weights:
    \[\Big\{(Y,W) \mapsto \frac{\{Y-m_n^{L}(x)\}L_{n,x}(W)}{s_n(x)}: x \in \mathcal{X}\Big\},\] where $\mathcal{X}$ is a subset of a Banach space. Therefore, Theorem~3.2 of \cite{Kato2019uniform} cannot be applied directly without verifying the required Gaussian and multiplier approximations for this new score class. 
\end{remark}
Note that for a fixed $\tau\in(0,1)$, define the conditional multiplier critical
value
\begin{equation*}
\widehat c_n(1-\tau)
:=\inf\left\{t\in\mathbb R:\mathbb P_\xi \left(\left\|\widehat{\mathbb Z}_n^\xi\right\|_{\mathcal X}
\le t \,\middle|\, \mathcal D_n\right)\ge1-\tau\right\}.
\label{eq:uniform-bootstrap-critical-value}
\end{equation*}
The feasible simultaneous confidence band is
\begin{equation}
\widehat{\mathcal C}_{n,1-\tau}(x)
:=
\left[
\widehat m_n(x)
\pm
\frac{\widehat s_n(x)}
{\sqrt n\,\widehat D_n^L(x)}
\widehat c_n(1-\tau)
\right],
\qquad x\in\mathcal X.
\label{eq:feasible-uniform-confidence-band}
\end{equation}
\begin{theorem}
\label{thm:feasible-uniform-confidence-band}
Suppose Conditions \ref{unif:measurability}--\ref{unif:big} hold. Then
\begin{equation}
\mathbb P
\left\{
m(x)\in
\widehat{\mathcal C}_{n,1-\tau}(x)
\text{ for every }x\in\mathcal X
\right\}
=
1-\tau+o(1).
\label{eq:feasible-uniform-coverage}
\end{equation}
\end{theorem}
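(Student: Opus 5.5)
The plan is to reduce the coverage event to a statement about the supremum of a studentized process, and then pass through the Gaussian coupling (U7) and the multiplier approximation (U9)(b).

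\textbf{Step 1: linearize the ratio.} On the event $\inf_x \widehat D_n^L(x)\ge d_n/2$, which has probability tending to one by Theorem~\ref{thm:uniform-bias}, we have $\widehat D_n^L(x)>0$ for all $x$. Hence
\[
\widehat m_n(x)-m_n^L(x)=\frac{\mathbb P_n f_{n,x}}{\widehat D_n^L(x)},
\]
using the identity $\widehat N_n^L(x)-m_n^L(x)\widehat D_n^L(x)=\mathbb P_n f_{n,x}$ and $\mathbb P f_{n,x}=0$. Define
\[
T_n(x):=\frac{\sqrt n\,\widehat D_n^L(x)}{\widehat s_n(x)}\{\widehat m_n(x)-m(x)\}.
\]
Then
\[
T_n(x)=\frac{s_n(x)}{\widehat s_n(x)}\,\mathbb G_n\psi_{n,x}+\frac{s_n(x)}{\widehat s_n(x)}\cdot\frac{\widehat D_n^L(x)}{D_n^L(x)}\cdot\frac{\sqrt n D_n^L(x)}{s_n(x)}\{m_n^L(x)-m(x)\}.
\]
The coverage event is exactly $\{\|T_n\|_{\mathcal X}\le \widehat c_n(1-\tau)\}$.

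\textbf{Step 2: show $\bigl|\|T_n\|_{\mathcal X}-\|\mathbb G_n\psi_{n,\cdot}\|_{\mathcal X}\bigr|=o_{\mathbb P}(\mathfrak m_n^{-1})$.}
\begin{itemize}
\item By (U9)(a), $\sup_x|s_n/\widehat s_n-1|=o_{\mathbb P}(\mathfrak m_n^{-2})$. Combine this with $\|\mathbb G_n\psi_{n,\cdot}\|_{\mathcal X}=O_{\mathbb P}(\mathfrak m_n)$, which follows from (U7) and Markov's inequality applied to $\|\mathbb Z_n^G\|_{\mathcal X}$. The multiplicative perturbation of the first term is then $o_{\mathbb P}(\mathfrak m_n^{-1})$. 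This is why (U9)(a) carries the factor $\mathfrak m_n^2$.
\item For the bias term, I need $\sup_x \widehat D_n^L/D_n^L=O_{\mathbb P}(1)$. The uniform denominator bound from the proof of Theorem~\ref{thm:uniform-bias} gives $\sup_x|\widehat D_n^L-D_n^L|=O_{\mathbb P}(a_{0n})$, and $a_{0n}/d_n\to0$ by (U6). Together with (U8), the bias term is $o_{\mathbb P}(\mathfrak m_n^{-1})$.
\end{itemize}
Then (U7) transfers the conclusion to $\|\mathbb Z_n^G\|_{\mathcal X}$.

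\textbf{Step 3: anti-concentration.} This step needs Gaussian anti-concentration for the supremum of a tight Gaussian process with unit variances. I would invoke the Chernozhukov--Chetverikov--Kato bound
\[
\sup_t\mathbb P\{|\|\mathbb Z_n^G\|_{\mathcal X}-t|\le\epsilon\}\lesssim \epsilon\,\mathfrak m_n,
\]
which holds for separable versions with $\operatorname{Var}\mathbb Z_n^G(x)=1$. Choosing $\epsilon=\epsilon_n\mathfrak m_n^{-1}$ with $\epsilon_n\to0$ slowly enough to dominate the $o_{\mathbb P}$ terms gives
\[
\sup_t\bigl|\mathbb P\{\|T_n\|_{\mathcal X}\le t\}-\mathbb P\{\|\mathbb Z_n^G\|_{\mathcal X}\le t\}\bigr|\to0.
\]

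\textbf{Step 4: the random critical value.} Let $c_n(q)$ denote the $q$-quantile of $\|\mathbb Z_n^G\|_{\mathcal X}$. By (U9)(b), for any fixed $\nu>0$, with probability tending to one,
\[
c_n(1-\tau-\nu)\le\widehat c_n(1-\tau)\le c_n(1-\tau+\nu).
\]
Because the law of $\|\mathbb Z_n^G\|_{\mathcal X}$ is continuous by anti-concentration, the Kolmogorov-distance approximation from Step 3 evaluates at these deterministic quantiles to $1-\tau\pm\nu+o(1)$. Letting $\nu\downarrow0$ yields \eqref{eq:feasible-uniform-coverage}.

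\textbf{Main obstacle.} I expect Step 3 to be the hardest. It requires anti-concentration at scale $\mathfrak m_n^{-1}$ for a Gaussian process indexed by a subset of a Banach space, and it is not stated among the assumptions. I would argue it via the Chernozhukov--Chetverikov--Kato anti-concentration inequality for separable Gaussian processes with unit variance, applied on a countable dense subset of $\mathcal X$ given by the pointwise-separability in (U1). The rates built into (U7)--(U9), each of order $\mathfrak m_n^{-1}$, are calibrated precisely so that this inequality closes the argument.
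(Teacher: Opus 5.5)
Your proposal is correct and follows the paper's proof essentially step for step: the exact ratio identity, the $o_{\mathbb P}(\mathfrak m_n^{-2})\cdot O_{\mathbb P}(\mathfrak m_n)$ control of the studentization error via (U9)(a) together with (U7) and Markov's inequality, the bias term via (U8), the transfer through the coupling (U7), the anti-concentration bound $4\varepsilon\mathfrak m_n$ (which the paper simply asserts and you correctly attribute to Chernozhukov--Chetverikov--Kato), and the quantile sandwich from (U9)(b). Two small additions would complete it. First, you also need $\inf_{x\in\mathcal X}\widehat s_n(x)>0$ with probability tending to one, which follows from (U9)(a) exactly as in the paper. Second, the separability used for anti-concentration is most naturally obtained from the tightness of $\mathbb Z_n^G$ in $\ell^\infty(\mathcal X)$ rather than from (U1).
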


\subsubsection{Global hypothesis testing}

The simultaneous confidence band developed above can be used directly to construct global tests for the regression function over $\mathcal{X}$. Let $m_0:\mathcal{X}\to\mathbb{R}$ be a prespecified reference function. Consider the hypotheses

\begin{equation}
    H_0:m(x)=m_0(x)\quad \text{for every }x\in\mathcal{X},
\end{equation}
against
\begin{equation}
    H_1:m(x)\neq m_0(x)\quad \text{for at least one }x\in\mathcal{X}.
\end{equation}
A natural test statistic is the supremum of the studentized discrepancy between the estimator and the null regression function, $ T_n
=
\sup_{x\in\mathcal{X}}
\left|
\frac{
\sqrt{n}\,\widehat D_n^L(x)
}{
\widehat s_n(x)
}
\left\{
\widehat m_n(x)-m_0(x)
\right\}
\right|$.
Let $\widehat c_n(1-\tau)$ denote the conditional multiplier critical value defined above. We reject $H_0$ at asymptotic level $\tau$ whenever $T_n>\widehat c_n(1-\tau).$
Equivalently, by the duality between simultaneous confidence bands and global hypothesis tests, $H_0$ is rejected whenever the null function $m_0(\cdot)$ is not entirely contained in the simultaneous confidence band. 
Under $H_0$ and Conditions \ref{unif:measurability}--\ref{unif:big}, Theorem~\ref{thm:feasible-uniform-confidence-band} implies
\[
\mathbb P_{H_0}
\left\{
T_n>\widehat c_n(1-\tau)
\right\}
=
\tau+o(1).
\]
Hence, the proposed test has asymptotic size $\tau$.

This testing framework permits simultaneous assessment of the regression function over the entire predictor region $\mathcal{X}$ rather than at a single fixed predictor value. For example, taking $m_0(x)\equiv c$ yields a global test of whether the regression function is equal to a constant level $c$ throughout $\mathcal{X}$. More generally, $m_0(\cdot)$ may represent a scientifically specified reference curve or a regression function implied by a parametric model. Rejection indicates that the true regression function departs from the null specification at least somewhere over $\mathcal{X}$.

\subsection{Construction of approximate inverse weights}
\label{sec:invWeight}

An important feature of nonparametric regression with measurement error is its intrinsic connection with statistical inverse problems. When a predictor is observed with error, the data contain only a distorted version of the latent predictor of scientific interest, so the regression procedure must recover relevant information about the unobserved predictor from its contaminated measurements. This is fundamentally different from ordinary nonparametric regression, where the predictor entering the regression function is assumed to be observed directly. In finite-dimensional settings, this recovery problem is commonly formulated through deconvolution: the measurement-error mechanism smooths or obscures information about the latent predictor, and estimation requires inverting, either exactly or approximately, this distortion \citep{fan1991optimal,fan1993nonparametric}. Such inverse problems are typically ill-posed, in the sense that small perturbations in the observed distribution may lead to substantial changes in the recovered quantity, making identifiability, stability, and regularization central statistical considerations \citep{cavalier2008nonparametric}. The problem is particularly challenging for Banach space-valued predictors. Although probability measures on separable Banach space can be represented by characteristic functionals on the dual, however, there is no canonical locally finite, translation-invariant analogue of Lebesgue measure in infinite-dimensional separable Banach spaces. So the Fourier-transform-based inversion techniques that underlie classical Euclidean deconvolution are generally not directly available in this setting \citep{kuo1975, vakhania1987probability,DelaigleMeister2021}.

Our approach therefore represents the measurement-error mechanism more generally through an operator that maps functions evaluated at the contaminated predictor to corresponding functions of the latent predictor. Rather than attempting to reconstruct each unobserved predictor $X$ itself, we seek only to recover the weighting mechanism required for nonparametric regression. Specifically, for the kernel weight that would be used if $X$ were observed without error, we search for a function of the contaminated observation $W$ whose conditional action under the measurement-error mechanism reproduces that kernel weight. This leads naturally to the construction of an inverse weight and transforms the estimation problem into an operator inversion problem. Within this formulation, the existence of such an inverse weight is closely related to identification, whereas the stability of the inversion governs the feasibility and statistical accuracy of the resulting estimator. When exact inversion does not exist or is unstable, approximate inversion and regularization provide natural alternatives. This operator perspective therefore establishes a direct connection between nonparametric regression with measurement error and the broader theory of statistical inverse problems, while remaining applicable to general Banach space-valued random elements for which conventional Euclidean deconvolution methods are not readily available.

The construction below is formulated at the population level. Although the measurement-error distribution \(P_U\) is assumed to be known, the Hilbert-space formulation of the regularized inverse involves \(P_X\) through the space \(L^2(P_X)\) and the adjoint operator \(A^*\). Thus, the resulting Tikhonov weight should be interpreted as an oracle construction rather than as a directly computable data-based weight. Its role here is to establish the existence of stable approximate inverse weights and to provide sufficient conditions under which the abstract assumptions used in the preceding consistency result are nonempty. A feasible implementation would additionally require estimation or regularization of the latent distribution \(P_X\), which we do not pursue here.

Before stating the formal proposition, we introduce some notation that will be used throughout the subsequent analysis.

\begin{notation}
For a linear operator
$
A:L^2(P_W)\to L^2(P_X)
$, the range of \(A\) is defined as
$
\operatorname{Ran}(A)
=
\{AL:L\in L^2(P_W)\}
\subseteq L^2(P_X).
$ That is, \(\operatorname{Ran}(A)\) is the collection of all elements in \(L^2(P_X)\) that can be obtained by applying \(A\) to some \(L\in L^2(P_W)\).
The closure of the range is defined by
$$
\overline{\operatorname{Ran}(A)}
=
\left\{
K\in L^2(P_X):
\text{ there exists a sequence }
\{L_m\}_{m\ge1}\subset L^2(P_W)
\text{ such that }
\|AL_m-K\|_{L^2(P_X)}\to0
\right\}
$$
Therefore,
$
\overline{\operatorname{Ran}(A)}=L^2(P_X)
$ means that every \(K\in L^2(P_X)\)  can be approximated arbitrarily well, in the \(L^2(P_X)\) norm, by elements of the form \(AL\). Equivalently, for every \(K\in L^2(P_X)\), there exists a sequence \(\{L_m\}\subset L^2(P_W)\) such that
$
AL_m\to K$ in 
$L^2(P_X).
$
\end{notation}

Consider the bounded linear operator $
A:L^2(P_W)\longrightarrow L^2(P_X)$ defined by $
(AL)(X)=\mathbb{E}\{L(W)\mid X\}.
$ Since \(W=X+U\) and \(U\) is independent of \(X\),
$
(AL)(X)
=
\int_{\mathcal B}L(X+u)\,P_U(du)
=
(\mathcal A_U L)(X)$ almost surely. Thus, \(A\) provides the Hilbert-space representation of the measurement-error operator introduced above.

The following result shows that, under a nonvanishing characteristic-functional condition on the measurement error, the range of \(A\) is dense in \(L^2(P_X)\).

\begin{proposition}\label{prop:dense-range}
Let \(i=\sqrt{-1}\). Suppose that \(\mathcal B\) is a separable Banach space and that the characteristic functional of \(U\),
$
\phi_U(\ell)
=
\mathbb{E}\!\left[\exp\{i\ell(U)\}\right]$, for
$\ell\in\mathcal B^*$, satisfies
$ \phi_U(\ell)\neq0$
for every $\ell\in\mathcal B^*$. Then $
\overline{\operatorname{Ran}(A)}
=
L^2(P_X).
$
\end{proposition}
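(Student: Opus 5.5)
The plan is to use the standard Hilbert-space duality. Since $A:L^2(P_W)\to L^2(P_X)$ is bounded and linear, we have $\overline{\operatorname{Ran}(A)}=\ker(A^*)^{\perp}$. It therefore suffices to show that the only $g\in L^2(P_X)$ orthogonal to $\operatorname{Ran}(A)$ is $g=0$ $P_X$-almost surely. Concretely, I would fix a real-valued $g\in L^2(P_X)$ with
\[
\langle AL,g\rangle_{L^2(P_X)}=\mathbb{E}\{(AL)(X)g(X)\}=0 \qquad\text{for all } L\in L^2(P_W).
\]
By the tower property and the representation $(AL)(X)=\mathbb{E}\{L(W)\mid X\}$, this is equivalent to $\mathbb{E}\{L(W)g(X)\}=0$ for every $L\in L^2(P_W)$.

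The next step is to test this orthogonality against trigonometric functions of continuous linear functionals. Fix $\ell\in\mathcal B^*$ and take $L(w)=\cos\ell(w)$ and $L(w)=\sin\ell(w)$. Both are bounded Borel functions, hence lie in $L^2(P_W)$. Combining the two identities gives
\[
\mathbb{E}\{e^{i\ell(W)}g(X)\}=0.
\]
Since $W=X+U$ with $U$ independent of $X$, linearity of $\ell$ gives $e^{i\ell(W)}=e^{i\ell(X)}e^{i\ell(U)}$. Because $g(X)e^{i\ell(X)}$ is integrable ($g\in L^2\subset L^1(P_X)$), the expectation factorizes:
\[
0=\phi_U(\ell)\,\mathbb{E}\{g(X)e^{i\ell(X)}\}.
\]
The hypothesis $\phi_U(\ell)\neq0$ then yields $\mathbb{E}\{g(X)e^{i\ell(X)}\}=0$ for every $\ell\in\mathcal B^*$.

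Now define the finite signed Borel measure $\mu(dx)=g(x)\,P_X(dx)$ on $\mathcal B$. The previous step says exactly that its Fourier transform $\widehat\mu(\ell)=\int e^{i\ell(x)}\mu(dx)$ vanishes identically on $\mathcal B^*$. To conclude, I would take the Jordan decomposition $\mu=\mu^+-\mu^-$, with $\mu^\pm(dx)=g^\pm(x)P_X(dx)$ finite nonnegative measures. Then $\widehat{\mu^+}=\widehat{\mu^-}$ on $\mathcal B^*$, and evaluating at $\ell=0$ shows that the two measures have equal total mass $c$. If $c=0$ we are done. Otherwise $\mu^\pm/c$ are Borel probability measures with the same characteristic functional, so they coincide by the uniqueness theorem for characteristic functionals on separable Banach spaces (see \citet{vakhania1987probability}). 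Hence $\mu^+=\mu^-$. Since $\mu^+$ and $\mu^-$ are concentrated on the disjoint sets $\{g>0\}$ and $\{g<0\}$, this forces $g=0$ $P_X$-a.s. Thus $\ker(A^*)=\{0\}$ and $\overline{\operatorname{Ran}(A)}=L^2(P_X)$.

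The step I expect to need the most care is this final uniqueness argument. One has to justify that characteristic functionals determine Borel measures on $\mathcal B$, which rests on separability: the Borel $\sigma$-field is generated by $\mathcal B^*$, so the cylinder $\sigma$-field coincides with the Borel one, and finite-dimensional Fourier uniqueness together with a $\pi$–$\lambda$ argument extends to all Borel sets. The reduction from the signed measure to the probability-measure case via Jordan decomposition and normalization handles the fact that $\mu$ is signed. A minor secondary point is that $A$ acts on real $L^2$ spaces, which is why cosines and sines are used rather than the complex exponential directly.
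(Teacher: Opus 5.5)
Your proposal is correct and follows essentially the same route as the paper: reduce to $\ker(A^*)=\{0\}$ via $\overline{\operatorname{Ran}(A)}=\ker(A^*)^{\perp}$, use independence to factor $\mathbb{E}\{g(X)e^{i\ell(W)}\}=\phi_U(\ell)\,\mathbb{E}\{g(X)e^{i\ell(X)}\}$, and conclude from uniqueness of characteristic functionals for the signed measure $g\,dP_X$. The remaining differences are only in the level of detail. You test against $\cos\ell$ and $\sin\ell$, which avoids complex test functions and the explicit conditional-expectation form of $A^*$, and you use the Jordan decomposition to reduce to the probability-measure uniqueness theorem; the paper states both steps more briefly.
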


The nonvanishing characteristic-functional condition is satisfied by an important class of measurement-error distributions.
\begin{example}[Gaussian measurement error]
Suppose that \(U\) is a Gaussian random element of \(\mathcal B\) with mean \(\mu_U\). Then, for every \(\ell\in\mathcal B^*\),
$
\phi_U(\ell)
=
\exp\left\{
i\ell(\mu_U)
-
\frac{1}{2}\operatorname{Var}\{\ell(U)\}
\right\}.
$ Since the exponential function is never zero, $
\phi_U(\ell)\neq0$, $\ell\in\mathcal B^*,
$ and hence Proposition~\ref{prop:dense-range} applies.
\end{example}

For the general operator $A$, Proposition~\ref{prop:dense-range} implies that, for every \(K\in L^2(P_X)\) and every \(\varepsilon>0\), there exists an \(L\in L^2(P_W)\) such that $
\|AL-K\|_{L^2(P_X)}<\varepsilon.
$ Thus, every element of \(L^2(P_X)\), and in particular the target kernel weight \(K_{h,x}\), can be approximated arbitrarily well by elements in the range of \(A\), even when the exact inverse equation $AL=K_{h,x}$ does not admit a solution. Dense-range and related completeness conditions arise naturally in the identification of ill-posed inverse problems; see, for example, \cite{NeweyPowell2003,CarrascoFlorensRenault2007}. However, the density of the range alone provides no control over the magnitude of the approximating function \(L\). In an ill-posed problem, increasingly accurate approximation of \(K_{h,x}\) may require inverse weights with arbitrarily large \(L^2(P_W)\)-norm, leading to instability.

A standard approach for controlling this instability is Tikhonov regularization \cite{CarrascoFlorensRenault2007,DarollesFanFlorensRenault2011}. For an arbitrary target \(K\in L^2(P_X)\) and a regularization parameter \(\lambda>0\), consider the penalized inverse problem

$$
\min_{L\in L^2(P_W)}
\left\{
\|AL-K\|_{L^2(P_X)}^2
+
\lambda\|L\|_{L^2(P_W)}^2
\right\}.
$$

The first term measures the discrepancy between the action of the inverse weight and the desired target, whereas the second penalizes inverse weights with a large \(L^2(P_W)\)-norm. The parameter \(\lambda\) therefore controls the trade-off between approximation accuracy and stability.

Since \(A\) is bounded, the above criterion admits a unique minimizer given by
$$
L_\lambda
=
(A^*A+\lambda I)^{-1}A^*K
=
A^*(AA^*+\lambda I)^{-1}K, 
$$

where \(A^*:L^2(P_X)\to L^2(P_W)\) denotes the Hilbert-space adjoint of \(A\). Applying this construction to the target kernel weight \(K_{h,x}\), define
$L_{\lambda,h,x}
=
A^*(AA^*+\lambda I)^{-1}K_{h,x}$. The corresponding approximation residual is
$
R_{\lambda,h,x}
=
AL_{\lambda,h,x}-K_{h,x}.
$ Using the identity
$
AA^*(AA^*+\lambda I)^{-1}
=
I-\lambda(AA^*+\lambda I)^{-1},
$, 
we obtain
$
R_{\lambda,h,x}
=
-\lambda(AA^*+\lambda I)^{-1}K_{h,x}.
$

Moreover, by the spectral calculus for the nonnegative self-adjoint operator \(AA^*\),

$$
\|L_{\lambda,h,x}\|_{L^2(P_W)}
\le
\frac{\|K_{h,x}\|_{L^2(P_X)}}{2\sqrt{\lambda}}.
$$

For every fixed \(h>0\), Proposition~\ref{prop:dense-range} implies

$$
\|R_{\lambda,h,x}\|_{L^2(P_X)}
\longrightarrow 0
\qquad\text{as }\lambda\downarrow0.
$$

Hence, Tikhonov regularization provides a sequence of stable approximate inverse weights whose action approaches the target kernel weight as the regularization is relaxed. This conclusion is qualitative, however. In Theorem~\ref{thm:main}, the bandwidth \(h=h_n\) varies with the sample size, so the target \(K_{h_n,x}\) also changes with \(n\). Assumption \ref{cond:residual} requires the corresponding approximation residual to be negligible relative to the local normalization \(D_{h_n}(x)\). A quantitative bound on this residual is therefore needed.

To obtain such a bound, we impose a standard source condition. Suppose that, for some \(\nu\in(0,1]\), $K_{h,x}
=
(AA^*)^\nu g_{h,x}$ for some \(g_{h,x}\in L^2(P_X)\). This condition describes the regularity of the target \(K_{h,x}\) relative to the smoothing properties of the operator \(A\). Source conditions of this form are standard in the theory of regularized inverse problems and provide explicit rates for the approximation error; see \citet[Section 3]{CarrascoFlorensRenault2007},\citet{DarollesFanFlorensRenault2011}.

To make this condition more transparent, suppose for illustration that \(AA^*\) admits an orthonormal eigensystem \(\{(\mu_j,e_j)\}_{j\ge1}\), with
$
AA^*e_j=\mu_j e_j$ for
$\mu_j>0,
$
and write
$
K_{h,x}
=
\sum_{j\ge1}k_{j,h,x}e_j.
$
Then
$
K_{h,x}=(AA^*)^\nu g_{h,x}
$
holds for some \(g_{h,x}\in L^2(P_X)\) if
$
\sum_{j\ge1}
\frac{k_{j,h,x}^2}{\mu_j^{2\nu}}
<\infty.
$

In that case,
$
g_{h,x}
=
\sum_{j\ge1}
\frac{k_{j,h,x}}{\mu_j^\nu}e_j.
$
Thus, the source condition requires the target kernel weight to have sufficiently small components in directions corresponding to very small eigenvalues of \(AA^*\), which are precisely the directions along which inversion is most unstable.

Under this source condition,
$
R_{\lambda,h,x}
=
-\lambda(AA^*+\lambda I)^{-1}(AA^*)^\nu g_{h,x},
$ and spectral calculus yields
$$
\|R_{\lambda,h,x}\|_{L^2(P_X)}
\le
C_\nu\lambda^\nu
\|g_{h,x}\|_{L^2(P_X)},
$$

where

$$
C_\nu
=
\sup_{t\ge0}\frac{t^\nu}{1+t}
<\infty.
$$

This quantitative control of the regularization bias allows the general consistency result to be applied to the Tikhonov inverse weights.

\begin{corollary}\label{cor:tikhonov}
Suppose Assumptions \ref{cond:K}--\ref{cond:m} hold. Assume that, for some \(\nu\in(0,1]\) and all sufficiently large \(n\), there exists \(g_{h_n,x}\in L^2(P_X)\) such that
$
K_{h_n,x}
=
(AA^*)^\nu g_{h_n,x}.
$ Suppose also that, for some \(M_Y<\infty\),
$
\mathbb{E}(Y^2\mid W)\le M_Y
$ almost surely.
Let \(h_n\downarrow0\) and \(\lambda_n\downarrow0\) satisfy $\frac{
\lambda_n^\nu
\|g_{h_n,x}\|_{L^2(P_X)}
}{
D_{h_n}(x)
}
\longrightarrow0$
and $n\lambda_nD_{h_n}(x)
\longrightarrow\infty$. Define $L_{\lambda_n,h_n,x}
=
A^*(AA^*+\lambda_n I)^{-1}K_{h_n,x}.$
Then Assumptions \ref{cond:residual} and \ref{cond:L} hold. Consequently,
$$
\left|
\widehat m_{\lambda_n,h_n}(x)-m(x)
\right|
=
O_{\mathbb P}
\left[
h_n^\alpha
+
\frac{
\lambda_n^\nu
\|g_{h_n,x}\|_{L^2(P_X)}
}{
D_{h_n}(x)
}
+
\frac{1}{
\sqrt{n\lambda_nD_{h_n}(x)}
}
\right].
$$
In particular,
$$
\widehat m_{\lambda_n,h_n}(x)
\overset{\mathbb P}{\longrightarrow}m(x).
$$

\end{corollary}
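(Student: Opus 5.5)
The plan is to verify Assumptions \ref{cond:residual} and \ref{cond:L} for the Tikhonov weight $L_n=L_{\lambda_n,h_n,x}=A^*(AA^*+\lambda_nI)^{-1}K_{h_n,x}$. Together with the standing Assumptions \ref{cond:K}--\ref{cond:m}, this lets us invoke Theorem~\ref{thm:main} directly. We then substitute the two explicit bounds into \eqref{eq:main-rate}. All operator facts needed are the spectral-calculus bounds displayed just before the corollary.

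\emph{Residual term.} Under the source condition,
\[
R_{\lambda_n,h_n,x}=-\lambda_n(AA^*+\lambda_nI)^{-1}(AA^*)^\nu g_{h_n,x},
\]
so $\|R_{\lambda_n,h_n,x}\|_{L^2(P_X)}\le C_\nu\lambda_n^\nu\|g_{h_n,x}\|_{L^2(P_X)}$. The text already shows $|r_{0}|+|r_1|\le(1+M_m)\|R\|_{L^2(P_X)}$; this is Cauchy--Schwarz together with $|m(X)|\le M_m$ from \ref{cond:m}. Dividing by $D_{h_n}(x)$ and using the first rate hypothesis gives \ref{cond:residual}, and it also gives the middle term of the claimed rate.

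\emph{Variance term.} Since $L_n\in L^2(P_W)$, we have $\operatorname{Var}\{L_n(W)\}\le\|L_n\|_{L^2(P_W)}^2$. For the second variance, conditioning on $W$ gives $\E\{Y^2L_n^2(W)\}\le M_Y\|L_n\|^2_{L^2(P_W)}$. Hence $s_0+s_1\le(1+\sqrt{M_Y})\|L_n\|_{L^2(P_W)}$, which also shows that $L_n(W)$ and $YL_n(W)$ lie in $L^2$. Next, $\|L_n\|_{L^2(P_W)}\le\|K_{h_n,x}\|_{L^2(P_X)}/(2\sqrt{\lambda_n})$. By \ref{cond:K}, $0\le K_{h,x}\le\|K\|_\infty\mathbf 1\{\|X-x\|\le h\}$, so
\[
\|K_{h_n,x}\|^2_{L^2(P_X)}=\E K_{h_n,x}^2(X)\le\|K\|_\infty\,\E K_{h_n,x}(X)=\|K\|_\infty D_{h_n}(x).
\]
Therefore
\[
\frac{s_0+s_1}{\sqrt n D_{h_n}(x)}\lesssim\frac{\sqrt{D_{h_n}(x)}}{\sqrt{n\lambda_n}\,D_{h_n}(x)}=\frac{1}{\sqrt{n\lambda_nD_{h_n}(x)}}\to0
\]
by the hypothesis $n\lambda_nD_{h_n}(x)\to\infty$. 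This establishes \ref{cond:L} and gives the last term of the rate.

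With \ref{cond:K}--\ref{cond:L} verified, Theorem~\ref{thm:main} yields \eqref{eq:main-rate}. Substituting the two bounds gives the displayed rate, and consistency follows because each of the three terms tends to zero; $h_n^\alpha\to0$ because $h_n\downarrow0$.

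The step that needs the most care is the kernel-norm bound $\|K_{h,x}\|^2_{L^2(P_X)}\lesssim D_h(x)$. It relies on nonnegativity and boundedness of $K$, and it is what converts the crude $\lambda^{-1/2}$ stability bound into the sharp $1/\sqrt{n\lambda_nD_{h_n}}$ scale. A second point to confirm is that the oracle weight is a Borel function of $W$ with the required integrability, so that Assumption \ref{cond:L} makes sense. This holds because $L_n\in L^2(P_W)$ by construction, and we can choose any Borel version of it.
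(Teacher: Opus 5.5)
Your proposal is correct and follows the paper's proof essentially step for step. The spectral bound $\|R_{\lambda_n,h_n,x}\|_{L^2(P_X)}\le C_\nu\lambda_n^\nu\|g_{h_n,x}\|_{L^2(P_X)}$ gives Assumption~\ref{cond:residual}; the bounds $\|L_{\lambda_n,h_n,x}\|_{L^2(P_W)}\le\|K_{h_n,x}\|_{L^2(P_X)}/(2\sqrt{\lambda_n})$, $\|K_{h_n,x}\|_{L^2(P_X)}^2\le\|K\|_\infty D_{h_n}(x)$, and the conditional second-moment bound on $s_{0,\lambda_n,h_n}(x)+s_{1,\lambda_n,h_n}(x)$ give Assumption~\ref{cond:L}; and Theorem~\ref{thm:main} then delivers the rate. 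Your explicit check that $L_n(W)$ and $YL_n(W)$ are square integrable, with the constant $1+\sqrt{M_Y}$, is a detail the paper leaves implicit.
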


The three terms in the convergence rate have distinct interpretations. The term \(h_n^\alpha\) is the usual local smoothing bias. The term $\frac{
\lambda_n^\nu
\|g_{h_n,x}\|_{L^2(P_X)}
}{
D_{h_n}(x)
}$ represents the approximation error induced by regularized inversion, whereas $\frac{1}{\sqrt{n\lambda_nD_{h_n}(x)}}$ captures the stochastic cost of using increasingly unstable inverse weights. The regularization parameter \(\lambda_n\) therefore controls the characteristic bias--stability trade-off of the inverse problem: decreasing \(\lambda_n\) improves the approximation of the uncontaminated kernel weight but increases the magnitude, and hence the sampling variability, of the corresponding inverse weight.

\begin{remark}[Computational caveat and a possible empirical construction]
The regularized inverse construction above is formulated in terms of the latent predictor \(X\), which is not directly observed. Although the measurement-error distribution \(P_U\) is assumed to be known, the observed data consist only of the contaminated measurements \(W=X+U\). Consequently, the distribution \(P_X\), the \(L^2(P_X)\) geometry, and the adjoint operator \(A^*\) appearing in $L_{\lambda,h,x}
=
A^*(AA^*+\lambda I)^{-1}K_{h,x}$ are not directly available from the observed sample. It is useful to distinguish this issue from knowledge of the forward measurement-error mechanism. Since \(P_U\) is known, for any fixed \(z\in\mathcal B\) and suitable \(L\), $(\mathcal A_U L)(z)
=
\int_{\mathcal B}L(z+u)\,P_U(du)$
is, in principle, determined by the measurement-error distribution. The computational difficulty arises instead from the \(L^2(P_X)\)-based inverse formulation. In particular,

$$
\|K_{h,x}\|_{L^2(P_X)}^2
=
\int_{\mathcal B}K_{h,x}^2(z)\,P_X(dz)
$$

depends on the unknown law of the unobserved predictor \(X\), while the adjoint 
$A^*:L^2(P_X)\to L^2(P_W)$ is defined through
$\langle AL,g\rangle_{L^2(P_X)}
=
\langle L,A^*g\rangle_{L^2(P_W)}.$
Equivalently, for suitable \(g\),
$(A^*g)(W)
=
\mathbb{E}\{g(X)\mid W\},$ which involves the latent conditional distribution of \(X\) given \(W\). Hence, the population Tikhonov inverse is not directly computable from \(\{(Y_i,W_i)\}_{i=1}^n\).

One possible route toward a feasible construction is to avoid estimating \(P_X\) explicitly and instead approximate the inverse weight within a finite-dimensional sieve space. For example, let
$
\mathcal L_J= \operatorname{span}\{\psi_1,\ldots,\psi_J\}
\subset L^2(P_W)
$ and represent
$
L_J(w)
=
\sum_{j=1}^J\theta_j\psi_j(w).
$

The population inverse problem could then be replaced by a regularized empirical problem for the coefficient vector \(\theta=(\theta_1,\ldots,\theta_J)^\top\), provided that the criterion can be expressed entirely in terms of observable quantities and the known error law \(P_U\). Such sieve or estimated-operator regularization is a standard strategy in statistical inverse problems. However, in the present measurement-error setting, a naive empirical replacement of $A:L^2(P_W)\to L^2(P_X)$
is not sufficient, because evaluating \(AL\) at sample points would require the unobserved \(X_i\). A feasible criterion must therefore be constructed so that the latent predictors do not enter directly.

Accordingly, the Tikhonov weights considered in the present analysis should be interpreted as population-level, or oracle, inverse weights. Their purpose is to establish existence, approximation, and stability properties and to provide sufficient conditions for the asymptotic theory. Developing a fully data-driven sieve or estimated-operator analogue that depends only on the contaminated observations and the known measurement-error distribution is an important extension, but is beyond the scope of the present work.
\end{remark}

\subsection{Partial contamination model}
\label{sec:partial}

The general inverse problem need not admit an explicit or stable solution. We first consider an important special case in which the measurement-error operator can be inverted directly, without requiring knowledge of the latent distribution \(P_X\). This provides both a concrete construction of the inverse weight and a useful benchmark for the more general regularized formulation considered subsequently. Suppose that
\begin{equation}\label{eq:partial}
P_U = p_0\delta_0+(1-p_0)\nu, \qquad \frac{1}{2}<p_0\le 1,
\end{equation}
where \(\delta_0\) denotes the point mass at the origin of \(\mathcal B\) and \(\nu\) is a Borel probability measure on \(\mathcal B\). Thus, with probability \(p_0\), the predictor is observed without contamination, whereas with probability \(1-p_0\), the measurement error is drawn from \(\nu\). Models of this form have been considered previously; 
see \citet{Hesse1995,YuanChen2002} and, in connection with well-posedness, \citet{AnHu2012}.

Define
\begin{equation}\label{eq:Qop}
(\mathcal{Q}f)(z) =
\int_{\mathcal{B}} f(z+u)\,\nu(du),
\qquad
q=\frac{1-p_0}{p_0}<1.
\end{equation}
Consider $\mathcal{Q}$ as a linear operator on the Banach space $\mathcal{B}_b(\mathcal{B})$ of bounded Borel functions equipped with the supremum norm $
\|f\|_\infty
=
\sup_{z\in\mathcal{B}} |f(z)|.
$
For every $f\in\mathcal{B}_b(\mathcal{B})$,
$
|(\mathcal{Q}f)(z)|
\le
\int_{\mathcal{B}} |f(z+u)|\,\nu(du)
\le
\|f\|_\infty,
$
and therefore
$
\|\mathcal{Q}f\|_\infty
\le
\|f\|_\infty.
$
Hence,
$
\|\mathcal{Q}\|\le 1.
$
Under the partial contamination model
\eqref{eq:partial}, the measurement-error operator satisfies
\begin{equation}
(\mathcal{A}_U f)(z)
=
p_0 f(z)
+
(1-p_0)\int_{\mathcal{B}} f(z+u)\,\nu(du),    
\end{equation}
so that
$$
\mathcal{A}_U
=
p_0 I+(1-p_0)\mathcal{Q}
=
p_0(I+q\mathcal{Q}).
$$
Since
$
q\|\mathcal{Q}\|
\le
q
=
\frac{1-p_0}{p_0}
<1,
$
the operator $I+q\mathcal{Q}$ is invertible on $\mathcal{B}_b(\mathcal{B})$. By the classical Neumann-series theorem,
$$
(I+q\mathcal{Q})^{-1}
=
\sum_{j=0}^{\infty}(-q)^j\mathcal{Q}^j;
$$
see \citet[Theorem~2.14]{Kress2014}. Consequently,
$$
\mathcal{A}_U^{-1}
=
\frac{1}{p_0}
\sum_{j=0}^{\infty}(-q)^j\mathcal{Q}^j.
$$
The above series converges in operator norm because
$$
\sum_{j=0}^{\infty}
\|(-q)^j\mathcal{Q}^j\|
\le
\sum_{j=0}^{\infty}
q^j\|\mathcal{Q}\|^j
\le
\sum_{j=0}^{\infty} q^j
=
\frac{1}{1-q}
<\infty.
$$
Therefore, for every bounded Borel function
$K:\mathcal{B}\to\mathbb{R}$, the inverse weight
$$
L
=
\mathcal{A}_U^{-1}K
=
\frac{1}{p_0}
\sum_{j=0}^{\infty}
(-q)^j\mathcal{Q}^jK
$$
is well defined, and the series converges uniformly on $\mathcal{B}$. By construction, $\mathcal{A}_U L=K.$
The same argument also yields a stability bound for the inverse operator. Indeed,

$$
\|\mathcal{A}_U^{-1}\|
\le
\frac{1}{p_0}
\sum_{j=0}^{\infty}
q^j\|\mathcal{Q}^j\|
\le
\frac{1}{p_0}
\sum_{j=0}^{\infty}q^j
=
\frac{1}{p_0(1-q)} = \frac{1}{2p_0-1}.
$$
Hence,
$
\|L\|_\infty
=
\|\mathcal{A}_U^{-1}K\|_\infty
\le
\|\mathcal{A}_U^{-1}\|\,\|K\|_\infty
\le
\frac{\|K\|_\infty}{2p_0-1}.
$
Thus, the condition $p_0>1/2$ guarantees both the existence of the Neumann-series inverse and a quantitative stability bound for the resulting inverse weight.
The bound
$
\|\mathcal A_U^{-1}\|
\le
\frac{1}{2p_0-1}
$
also provides a direct interpretation of the stability of the inverse problem under partial contamination. When \(p_0\) is close to one, most observations are uncontaminated, \(q=(1-p_0)/p_0\) is small, and the inverse operator remains well controlled. As \(p_0\downarrow1/2\), however, the bound increases and eventually diverges, indicating that inversion becomes progressively more sensitive to perturbations as the contaminated component becomes more prominent. Thus, the condition \(p_0>1/2\) is not merely technical: it ensures convergence of the Neumann series and provides a quantitative stability guarantee for the resulting inverse weight.

\begin{remark}
The preceding analysis treats the inverse-weight construction in a general operator framework and, in particular, allows for situations in which exact inversion is unavailable or unstable. We now consider an important special case in which the measurement-error operator admits an explicit bounded inverse. Under this partial contamination model, the inverse weight can be constructed directly from the known measurement-error distribution \(P_U\), without requiring knowledge of the latent distribution \(P_X\). Thus, this setting provides a concrete example in which the inverse-weight construction is both exact and directly implementable.
\end{remark}

\section{Discussion}
\label{sec:disc}

The paper develops a nonparametric regression framework for a latent Banach-valued predictor observed with measurement error. The central idea is to recover the local weighting mechanism required for kernel regression rather than reconstruct the latent predictor itself. This converts the errors-in-variables problem into an operator inversion problem and provides a common framework for estimation and inference with contaminated infinite-dimensional predictors.

The use of a general Banach space is substantive. Kernel regression requires a notion of proximity, but not an inner product. Although many functional observations can be embedded in \(L^2\), the resulting squared-integrable geometry need not be the most appropriate notion of similarity for a given scientific problem. Alternatives such as the \(L^1\) norm or the supremum norm on \(C(\mathcal T)\) emphasize, respectively, accumulated absolute discrepancy or the largest localized departure. Both lead to non-Hilbert Banach geometries. This flexibility is especially relevant for high-resolution functional data, where clinically or scientifically meaningful differences may be localized, distributional, or driven by extremes rather than by average squared separation. Similar general Banach and semimetric formulations have long been used in functional nonparametric regression \citep{ferraty2006nonparametric,ferratymasvieu2007,meilan2024nonparametric}.

This distinction becomes more important when the predictor is contaminated. Measurement error changes the distances from a target profile and therefore changes the local neighborhood on which the kernel estimator is based; it is not merely an additional source of variance. This issue arises naturally in wearable accelerometry, continuous glucose monitoring, spectroscopy, hyperspectral imaging, and other sensor-derived functional data. For example, accelerometry studies have shown that clinically relevant information can be concentrated in upper-tail and time-localized activity behavior \citep{niyogi2026quantifying}, while CGM trajectories are affected by calibration error, sensor noise, physiological lag, and sensor degradation \citep{bequette2010continuous,lodwig2003continuous}. Contaminated functional predictors have also been studied directly in nonparametric and functional regression settings \citep{ferraty2019nonparametric,jadhav2020functional}. The present approach differs by correcting the weighting functional used by nonparametric regression rather than imposing a linear model or first reconstructing the entire latent trajectory.

The inverse-problem formulation also clarifies an important distinction between general and structured contamination. For a general error mechanism, stable recovery may require regularization because exact inversion can be unavailable or ill posed. Within this framework, the proposed methodology develops both pointwise and uniform theory for the regression function, allowing local inference at a fixed predictor profile as well as simultaneous inference over a prescribed region of the predictor space. Under partial contamination, however, the measurement-error operator admits an explicit Neumann-series inverse, yielding an exact inverse weight that depends only on the known error law. The corresponding operator bound gives a direct interpretation of stability and shows how the severity of contamination affects the conditioning of the inverse problem. This contrast illustrates that the difficulty of measurement error is determined not only by its magnitude but also by the structure of the error distribution.

More broadly, the framework suggests treating functional geometry, measurement error, and statistical inversion as parts of a single problem. This perspective is relevant whenever the observed functional predictor is itself the output of a noisy measurement system and the scientifically meaningful notion of similarity need not be Hilbertian. Future work can build on this formulation through data-driven inverse constructions, richer error mechanisms, and adaptive regularization, while retaining the basic principle that the goal is to recover the statistical information needed for regression rather than necessarily reconstruct the full latent signal.

\newpage
\section*{Appendix}

\begin{proof}[Proof of Lemma \ref{lem:technical-details}]

For any integrable function $f(Y,W)$, write
\begin{equation}\label{eq:P-Pn}
\mathbb{P}f=\mathbb{E}\{f(Y,W)\}, \qquad \mathbb{P}_nf=\frac{1}{n}\sum_{i=1}^nf(Y_i,W_i).
\end{equation}

For (i), equation~\eqref{eq:conditional-action} and the definition of $R_{\lambda,h,x}$ imply
\[
\begin{aligned}
D^L_{\lambda,h}(x) &=\mathbb{E}\{(\mathcal{A}_UL_{\lambda,h,x})(X)\} = D_h(x)+r_{0,\lambda,h}(x),\\
N^L_{\lambda,h}(x) &=\mathbb{E}\!\left[m(X)(\mathcal{A}_UL_{\lambda,h,x})(X)\right] = N_h(x)+r_{1,\lambda,h}(x).
\end{aligned}
\]
For completeness, \eqref{eq:conditional-action} follows by taking any bounded Borel function $g:\mathcal{B}\to\mathbb{R}$ and using independence and Fubini's theorem:
\[
\mathbb{E}\{g(X)L(X+U)\} = \int_{\mathcal{B}}g(z)\int_{\mathcal{B}}L(z+u)\,P_U(du)\,P_X(dz) = \mathbb{E}\{g(X)(\mathcal{A}_UL)(X)\}.
\]
Since $U$ is independent of $(X,Y)$, $\mathbb{E}\{L(X+U)\mid X,Y\}=(\mathcal{A}_UL)(X)$; conditioning once more and using $\mathbb{E}(Y\mid X)=m(X)$ proves the second identity in \eqref{eq:conditional-action}.

For (ii),
\[
m_{h_n}(x)-m(x) = \frac{\mathbb{E}[\{m(X)-m(x)\}K_{h_n,x}(X)]}{D_{h_n}(x)}.
\]
For all sufficiently large $n$, $h_n\le\delta$. Because $\operatorname{supp}(K)\subseteq[0,1]$, the event $K_{h_n,x}(X)\ne0$ implies $\|X-x\|_{\mathcal{B}}\le h_n$. Assumption~\ref{cond:m} and $K\ge0$ therefore give
\[
|m_{h_n}(x)-m(x)| \le \frac{L_mh_n^\alpha\mathbb{E}K_{h_n,x}(X)}{D_{h_n}(x)} = L_mh_n^\alpha.
\]

For (iii), set $D_n:=D_{h_n}(x)$, $r_{0n}:=r_{0,\lambda_n,h_n}(x)$, and $s_{0n}:=s_{0,\lambda_n,h_n}(x)$. By (i), $P L_n(W)=D_n+r_{0n}$, while
\[
(P_n-P)L_n(W) = O_{\mathbb{P}}\!\left(\frac{s_{0n}}{\sqrt{n}}\right)
\]
by Chebyshev's inequality. Consequently,
\[
\frac{\widehat{D}_{\lambda_n,h_n}(x)}{D_n} = 1+\frac{r_{0n}}{D_n} + O_{\mathbb{P}}\!\left(\frac{s_{0n}}{\sqrt{n}D_n}\right) \overset{\bbP}{\longrightarrow}1
\]
by Assumptions~\ref{cond:residual} and \ref{cond:L}. The second convergence in \eqref{eq:den-relative} follows directly from $D^L_{\lambda_n,h_n}(x)=D_n+r_{0n}$. Since $D_n>0$ eventually, \eqref{eq:den-positive} follows as well.

It remains to prove (iv). Let
\begin{equation}\label{eq:Zni}
Z_{n,i}=\{Y_i-m^L_{\lambda_n,h_n}(x)\}L_n(W_i), \qquad i=1,\cdots,n.
\end{equation}
Then $\mathbb{E}Z_{n,i}=0$, and the triangle inequality in $L^2(\mathbb{P})$ yields
\begin{equation}\label{eq:tau-bound}
\{\operatorname{Var}(Z_{n,1})\}^{1/2} \le s_{1,\lambda_n,h_n}(x) + |m^L_{\lambda_n,h_n}(x)|s_{0,\lambda_n,h_n}(x).
\end{equation}
Moreover, (i), $|N_{h_n}(x)|\le M_mD_{h_n}(x)$, and Assumption~\ref{cond:residual} show that, for all sufficiently large $n$,
\begin{equation}\label{eq:bound-of-m}
 |m^L_{\lambda_n,h_n}(x)| \le \frac{M_mD_{h_n}(x)+|r_{1,\lambda_n,h_n}(x)|}{D_{h_n}(x)-|r_{0,\lambda_n,h_n}(x)|} \le 2M_m+1.   
\end{equation}
Finally, the exact identity
\begin{equation}\label{eq:exact-sampling}
\widehat{m}_{\lambda_n,h_n}(x)-m^L_{\lambda_n,h_n}(x) = \frac{(P_n-P)[\{Y-m^L_{\lambda_n,h_n}(x)\}L_n(W)]}{\widehat{D}_{\lambda_n,h_n}(x)}
\end{equation}
follows by subtracting $m^L_{\lambda_n,h_n}(x)$ from the ratio in \eqref{eq:estimator}. Applying Chebyshev's inequality to the numerator, using \eqref{eq:tau-bound}, and invoking (iii) proves \eqref{eq:sampling-bound}.
\end{proof}

\begin{proof}[Proof of Theorem \ref{thm:main}]
The error admits the decomposition
\begin{equation}\label{eq:three-term}
\begin{aligned}
\widehat{m}_{\lambda_n,h_n}(x)-m(x)
&=\{\widehat{m}_{\lambda_n,h_n}(x)-m^L_{\lambda_n,h_n}(x)\}\\
&\quad+\{m^L_{\lambda_n,h_n}(x)-m_{h_n}(x)\}
+\{m_{h_n}(x)-m(x)\}.
\end{aligned}
\end{equation}
We bound the three terms in turn. First, the identities in Lemma~\ref{lem:technical-details} give
\[
m^L_{\lambda_n,h_n}(x)-m_{h_n}(x)
=\frac{r_{1,\lambda_n,h_n}(x)-m_{h_n}(x)r_{0,\lambda_n,h_n}(x)}
{D_{h_n}(x)+r_{0,\lambda_n,h_n}(x)}.
\]
Because $K\ge0$ and $|m(X)|\le M_m$, we have $|m_{h_n}(x)|\le M_m$. Assumption~\ref{cond:residual} also implies that the absolute value of the denominator is at least $D_{h_n}(x)/2$ for all sufficiently large $n$. Hence
\begin{equation}\label{eq:inverse-bound-final}
|m^L_{\lambda_n,h_n}(x)-m_{h_n}(x)|
\le 2(1+M_m)
\frac{|r_{0,\lambda_n,h_n}(x)|+|r_{1,\lambda_n,h_n}(x)|}
{D_{h_n}(x)}.
\end{equation}
Second, the bound in Lemma~\ref{lem:technical-details} yields
\begin{equation}
\widehat{m}_{\lambda_n,h_n}(x)-m^L_{\lambda_n,h_n}(x)
=O_{\mathbb{P}}\!\left(
\frac{s_{0,\lambda_n,h_n}(x)+s_{1,\lambda_n,h_n}(x)}
{\sqrt{n}\,D_{h_n}(x)}\right).
\end{equation}
Finally, Lemma~\ref{lem:technical-details} gives
\begin{equation}\label{eq:smoothing-bias}
|m_{h_n}(x)-m(x)|\le L_mh_n^\alpha.
\end{equation}
Combining \eqref{eq:inverse-bound-final}--\eqref{eq:smoothing-bias} in \eqref{eq:three-term} proves \eqref{eq:main-rate}. Each term on the right-hand side of \eqref{eq:main-rate} converges to zero by the assumptions, which proves \eqref{eq:consistency}. Equation~\eqref{eq:den-positive} follows from Lemma~\ref{lem:technical-details}.
\end{proof}

\begin{proof}[Proof of Lemma~\ref{lemma:sigma}]
First, we will show
\begin{equation}
\label{ineq:D}
    \{D_n^L(x)\sigma_n^2(x)\}^{1/2}
    \le
    s_{1,\lambda_n,h_n}(x)
    +
    |m^L_{\lambda_n,h_n}(x)|\,s_{0,\lambda_n,h_n}(x).
\end{equation}

\noindent Recall that $D_n^{L}(x)= D_{\lambda_n,h_n}^{L}(x)$. 
Fix $x\in\mathcal B$ and take $n$ sufficiently large that
$D_n^L(x)>0$. Recall that $L_n=L_{\lambda_n,h_n,x}$ and
\[
D_n^L(x)=\mathbb E\{L_n(W)\},\qquad
m^L_{\lambda_n,h_n}(x)
=\frac{\mathbb E\{YL_n(W)\}}{D_n^L(x)}.
\]
Hence,
\[
\mathbb E\{YL_n(W)\}
=m^L_{\lambda_n,h_n}(x)\mathbb E\{L_n(W)\},
\]
which gives the centered decomposition
\begin{align*}
\{Y-m^L_{\lambda_n,h_n}(x)\}L_n(W)
&=\bigl[YL_n(W)-\mathbb E\{YL_n(W)\}\bigr]\\
&\quad
-m^L_{\lambda_n,h_n}(x)
 \bigl[L_n(W)-\mathbb E\{L_n(W)\}\bigr].
\end{align*}

Write $\|Z\|_{L^2(\mathbb{P})}=(\mathbb E|Z|^2)^{1/2}$. 
Assumption~\ref{cond:L} implies that both centered random variables on the right belong
to $L^2$. By the definition of $\sigma_n^2(x)$ in \eqref{eq:sigma-definition} and the
$L^2$ triangle inequality,
\begin{align*}
\{D_n^L(x)\sigma_n^2(x)\}^{1/2}
&=\bigl\|\{Y-m^L_{\lambda_n,h_n}(x)\}L_n(W)\bigr\|_{L^2(\mathbb{P})}\\
&\le
\bigl\|YL_n(W)-\mathbb E\{YL_n(W)\}\bigr\|_{L^2(\mathbb{P})}\\
&\quad+
|m^L_{\lambda_n,h_n}(x)|
\bigl\|L_n(W)-\mathbb E\{L_n(W)\}\bigr\|_{L^2(\mathbb{P})}\\
&=
\{\operatorname{Var}(YL_n(W))\}^{1/2}
+
|m^L_{\lambda_n,h_n}(x)|
\{\operatorname{Var}(L_n(W))\}^{1/2}\\
&=
s_{1,\lambda_n,h_n}(x)
+
|m^L_{\lambda_n,h_n}(x)|s_{0,\lambda_n,h_n}(x).
\end{align*}
This proves the inequality \eqref{ineq:D}. Also, by Assumptions \ref{cond:D} and \ref{cond:residual}, 
\begin{equation}\label{expressionD_n^L}
  \frac{D_n^{L}(x)}{D_{h_n}(x)}=1 + \frac{r_{0,\lambda_n,h_n}(x)}{D_{h_n}(x)} \to 1 \qquad \text{as}\qquad n \to \infty.  
\end{equation}
Hence, $D_n^{L}(x)>0$ for sufficiently large $n$. Also, by \eqref{eq:bound-of-m}, we have $m^L_{\lambda_n,h_n}(x)=O(1)$.
Hence there exists a constant $C \ge 1$ such that $|m^L_{\lambda_n,h_n}(x)| \le C$ for all sufficiently large $n$. Dividing both sides of Lemma~\ref{lemma:sigma} by $\sqrt{n}D_n^L(x)$, we get 
\begin{align*}
0\le\frac{\sigma_n(x)}{\sqrt{nD_n^L(x)}}
&\le
\frac{s_{1,\lambda_n,h_n}(x)
      +|m^L_{\lambda_n,h_n}(x)|s_{0,\lambda_n,h_n}(x)}
     {\sqrt n\,D_n^L(x)}\\
&\le
C\,\frac{D_{h_n}(x)}{D_n^L(x)}
\frac{s_{0,\lambda_n,h_n}(x)+s_{1,\lambda_n,h_n}(x)}
     {\sqrt n\,D_{h_n}(x)}
\longrightarrow0,
\end{align*}
by \eqref{expressionD_n^L} and Assumption~\ref{cond:L}. Now we square both sides to get
\[
\frac{\sigma_n^2(x)}{nD^L_{\lambda_n,h_n}(x)}
=\frac{\sigma_n^2(x)}{nD_n^L(x)}
\longrightarrow0, \qquad \text{as}\;\; n \to \infty.
\]
\end{proof}
\begin{proof}[Proof of Theorem~\ref{thm:pointwise-asymptotic-normality}]
By the definition of $m_n^L$,
\[
\E\{Q_n(Y,W)\}
=\E\{YL_n(W)\}-m_n^L\E\{L_n(W)\}=0,
\]
and
\[
\operatorname{Var}\{Q_n(Y,W)\}
=\E\{Q_n^2(Y,W)\}
=D_n^L\sigma_n^2.
\]

We first establish $\dfrac{\widehat D_n}{D_n^L}
\overset{\mathbb{P}}{\longrightarrow}1.$ Since
$\E(\widehat D_n)=D_n^L$, Chebyshev's inequality gives, for every
$\eta>0$,
\[
\mathbb{P}\!\left(
\left|\frac{\widehat D_n}{D_n^L}-1\right|>\eta
\right)
\le
\frac{\operatorname{Var}\{L_n(W)\}}
{n\eta^2(D_n^L)^2}
\longrightarrow0,
\]
which is what we wanted. 
\medskip

For $i=1,\ldots,n$, let $Q_{n,i}=\{Y_i-m_n^L\}L_n(W_i)$
and define the triangular array $X_{n,i}
=\dfrac{Q_{n,i}}{\sqrt{nD_n^L}\,\sigma_n}.$
For each $n$, the variables $X_{n,1},\ldots,X_{n,n}$ are independent,
have mean zero, and satisfy
\[
\sum_{i=1}^n\E(X_{n,i}^2)
=\frac{n\E(Q_{n,1}^2)}{nD_n^L\sigma_n^2}
=1.
\]
Moreover, for every $\varepsilon>0$,
\begin{align*}
\sum_{i=1}^n
\E\!\left[
X_{n,i}^2\mathbf 1\{|X_{n,i}|>\varepsilon\}
\right]
&=
\frac{
\E\!\left[
Q_n^2(Y,W)
\mathbf 1\!\left\{
|Q_n(Y,W)|>
\varepsilon\sqrt{nD_n^L}\,\sigma_n
\right\}
\right]
}{D_n^L\sigma_n^2}
\longrightarrow0
\end{align*}
by \eqref{eq:lindeberg-condition}. The Lindeberg--Feller central limit
theorem therefore yields
\begin{equation}
\label{eq:numerator-clt}
\frac{\sum_{i=1}^nQ_{n,i}}
{\sqrt{nD_n^L}\,\sigma_n}
\overset{d}{\longrightarrow}N(0,1).
\end{equation}

The ratio estimator satisfies the exact identity
\begin{align*}
\widehat m_n-m_n^L
&=
\frac{\mathbb{P}_n\{YL_n(W)\}-m_n^L\mathbb{P}_n\{L_n(W)\}}{\widehat D_n}\\
&=
\frac{\mathbb{P}_n Q_n(Y,W)}{\widehat D_n}
=\frac{1}{n\widehat D_n}\sum_{i=1}^nQ_{n,i}.
\end{align*}
It follows that
\begin{equation}
\label{eq:factorization}
\frac{\sqrt{nD_n^L}}{\sigma_n}
\{\widehat m_n-m_n^L\}
=
\frac{D_n^L}{\widehat D_n}
\frac{\sum_{i=1}^nQ_{n,i}}
{\sqrt{nD_n^L}\,\sigma_n}.
\end{equation}
Combining $\dfrac{\widehat D_n}{D_n^L}
\overset{\mathbb{P}}{\longrightarrow}1$ and
\eqref{eq:numerator-clt} with Slutsky's theorem proves
\eqref{eq:centered-clt}.

Finally, decompose
\begin{align*}
\frac{\sqrt{nD_n^L}}{\sigma_n}
\{\widehat m_n-m(x)\}
&=
\frac{\sqrt{nD_n^L}}{\sigma_n}
\{\widehat m_n-m_n^L\}+
\frac{\sqrt{nD_n^L}}{\sigma_n}
\{m_n^L-m(x)\}.
\end{align*}
The first term converges in distribution to $N(0,1)$ by
\eqref{eq:centered-clt}, while the second converges to zero by
\eqref{eq:standardized-bias}. A second application of Slutsky's theorem
proves \eqref{eq:true-target-clt}.
\end{proof}
\begin{proof}[Proof of Theorem \ref{thm:uniform-bias}]
We first establish the uniform stochastic approximation and then combine
it with the deterministic smoothing bound.

\medskip
\noindent Fix a sufficiently large $n$ for which
Condition~\ref{unif:population-denominator} holds.  For every
$x\in\mathcal X$,
\begin{align*}
\mathbb{P} f_{1,n,x}
&=
\E\!\left[
\{Y-m^L_{\lambda_n,h_n}(x)\}L_{n,x}(W)
\right]\\
&=
N_n^L(x)-m^L_{\lambda_n,h_n}(x)D_n^L(x)
=0.
\end{align*}

Thus $f_{1,n,x}=Z_{1,n,x}$ almost surely.  On
$\widehat D_n^L(x)\ne0$, we have:
\begin{align}\label{eq:exact-centered-ratio}
&\widehat m_{\lambda_n,h_n}(x)
-m^L_{\lambda_n,h_n}(x)
\nonumber\\
&\quad=
\frac{
\widehat N_n^L(x)-N_n^L(x)
-m^L_{\lambda_n,h_n}(x)
\{\widehat D_n^L(x)-D_n^L(x)\}
}{\widehat D_n^L(x)}
\nonumber\\
&\quad=
\frac{(\mathbb{P}_n-\mathbb{P})f_{1,n,x}}
{\widehat D_n^L(x)}.
\end{align}

\medskip
\noindent For each $n$, take an $\eta_n$-net $\{x_{n,1},\ldots,x_{n,M_n}\}$ of $\mathcal X$.  By Condition~\ref{unif:bernstein} and Bernstein's
inequality, for every $t>0$, every $1\le k\le M_n$, and
$j\in\{0,1\}$,
\begin{equation}
\mathbb{P}\left\{
\left|(\mathbb{P}_n-\mathbb{P})f_{j,n,x_{n,k}}\right|>t
\right\}
\le
2\exp\!\left\{
-\frac{nt^2}{2(v_{j,n}^2+b_{j,n}t)}
\right\}.
\label{eq:bernstein-one-net-point}
\end{equation}
The union bound therefore yields
\begin{equation}
\mathbb{P}\left\{
\max_{1\le k\le M_n}
\left|(\mathbb{P}_n-\mathbb{P})f_{j,n,x_{n,k}}\right|>t
\right\}
\le
2M_n\exp\!\left\{
-\frac{nt^2}{2(v_{j,n}^2+b_{j,n}t)}
\right\}.
\label{eq:bernstein-net}
\end{equation}

\noindent For $u>0$, put $s_n(u):=\ell_n+u$ and
\[
t_{j,n}(u)
:=
\sqrt{\frac{2v_{j,n}^2s_n(u)}{n}}
+2b_{j,n}\frac{s_n(u)}{n}.
\]
If $r=s_n(u)/n$, direct expansion gives
\[
t_{j,n}^2(u)
-2r\{v_{j,n}^2+b_{j,n}t_{j,n}(u)\}
\ge0.
\]
Consequently, the exponent in \eqref{eq:bernstein-net}, evaluated at
$t=t_{j,n}(u)$, is at least $s_n(u)$.  Since
$\ell_n=\log(2M_n)$,
\begin{equation}\label{eq:net-high-probability}
\mathbb{P}\left\{
\max_{1\le k\le M_n}
\left|(\mathbb{P}_n-\mathbb{P})f_{j,n,x_{n,k}}\right|
>t_{j,n}(u)
\right\}
\le e^{-u}.
\end{equation}
Because $\ell_n\ge\log2$, for each fixed $u$ the threshold in
\eqref{eq:net-high-probability} is bounded by a constant depending only
on $u$ times
\[
v_{j,n}\sqrt{\frac{\ell_n}{n}}
+b_{j,n}\frac{\ell_n}{n}.
\]
It follows that
\begin{equation}
\max_{1\le k\le M_n}
\left|(\mathbb{P}_n-\mathbb{P})f_{j,n,x_{n,k}}\right|
=
O_{\mathbb{P}}\!\left(
v_{j,n}\sqrt{\frac{\ell_n}{n}}
+b_{j,n}\frac{\ell_n}{n}
\right).
\label{eq:net-stochastic-order}
\end{equation}

\medskip
\noindent For every $x\in\mathcal X$, choose a net point $\pi_n(x)$ satisfying
$\|x-\pi_n(x)\|_{\mathcal B}\le\eta_n$.  The triangle inequality gives
\begin{align*}
\sup_{x\in\mathcal X}
\left|(\mathbb{P}_n-\mathbb{P})f_{j,n,x}\right|
&\le
\max_{1\le k\le M_n}
\left|(\mathbb{P}_n-\mathbb{P})f_{j,n,x_{n,k}}\right|\\
&\quad+
\sup_{x\in\mathcal X}
\left|(\mathbb{P}_n-\mathbb{P})
\{f_{j,n,x}-f_{j,n,\pi_n(x)}\}\right|.
\end{align*}
By Condition~\ref{unif:modulus},
\begin{align*}
&\sup_{x\in\mathcal X}
\left|(\mathbb{P}_n-\mathbb{P})
\{f_{j,n,x}-f_{j,n,\pi_n(x)}\}\right|\\
&\quad\le
\eta_n^{\gamma_j}
\{\mathbb{P}_n A_{j,n}+\mathbb{P} A_{j,n}\}.
\end{align*}
Moreover, $A_{j,n}\ge0$ and
$\mathbb{P} A_{j,n}\le c_{j,n}$.  Markov's inequality gives, for every
$M>0$,
\[
\mathbb{P}\{\mathbb{P}_n A_{j,n}>Mc_{j,n}\}
\le\frac1M.
\]
Hence $\mathbb{P}_n A_{j,n}=O_{\mathbb{P}}(c_{j,n})$.  Combining this with
\eqref{eq:net-stochastic-order} proves
\begin{equation}
\Delta_{j,n}
:=
\sup_{x\in\mathcal X}
\left|(\mathbb{P}_n-\mathbb{P})f_{j,n,x}\right|
=O_{\mathbb{P}}(a_{jn}),
\qquad j\in\{0,1\}.
\label{eq:uniform-empirical-process-rate}
\end{equation}

\medskip
\noindent For $j=0$, the quantity in
\eqref{eq:uniform-empirical-process-rate} is exactly
$\Delta_{0,n}
=
\sup_{x\in\mathcal X}
|\widehat D_n^L(x)-D_n^L(x)|.$
By Conditions~\ref{unif:population-denominator} and
\ref{unif:tuning},
\[
\frac{\Delta_{0,n}}{d_n}
=
O_{\mathbb{P}}\!\left(\frac{a_{0n}}{d_n}\right)
=o_{\mathbb{P}}(1).
\]
Let
\[
\mathcal E_n
:=
\left\{\Delta_{0,n}\le\frac{d_n}{2}\right\}.
\]
Then $\mathbb{P}(\mathcal E_n)\to1$.  On $\mathcal E_n$,
\begin{align*}
\inf_{x\in\mathcal X}\widehat D_n^L(x)
&\ge
\inf_{x\in\mathcal X}D_n^L(x)-\Delta_{0,n}\\
&\ge d_n-\frac{d_n}{2}
=\frac{d_n}{2}>0.
\end{align*}
This proves \eqref{eq:uniform-empirical-denominator-positive}.

\medskip
\noindent On $\mathcal E_n$, identity \eqref{eq:exact-centered-ratio} is valid
simultaneously for all $x\in\mathcal X$.  Therefore,
\begin{align}
S_n
&:=
\sup_{x\in\mathcal X}
\left|
\widehat m_{\lambda_n,h_n}(x)
-m^L_{\lambda_n,h_n}(x)
\right|
\nonumber\\
&\le
\frac{2}{d_n}
\sup_{x\in\mathcal X}
\left|(\mathbb{P}_n-\mathbb{P})f_{1,n,x}\right|
=\frac{2\Delta_{1,n}}{d_n}.
\label{eq:stochastic-ratio-on-good-event}
\end{align}
It follows from \eqref{eq:uniform-empirical-process-rate} that
\begin{equation}
S_n
=O_{\mathbb{P}}\!\left(\frac{a_{1n}}{d_n}\right).
\label{eq:uniform-stochastic-ratio-rate}
\end{equation}
To justify the unconditional order explicitly, note that for every
$M>0$,
\[
\mathbb{P}\left\{
S_n>M\frac{a_{1n}}{d_n}
\right\}
\le
\mathbb{P}(\mathcal E_n^c)
+\mathbb{P}\left\{2\Delta_{1,n}>Ma_{1n}\right\}.
\]
The first term tends to zero, while the second can be made arbitrarily
small uniformly for all sufficiently large $n$ by choosing $M$ large.

\medskip
\noindent Fix a sufficiently large $n$ for which $h_n\le\delta$ and
$D_{h_n}(x)>0$ for every $x\in\mathcal X$.  For each
$x\in\mathcal X$,
\begin{align}
m_{h_n}(x)-m(x)
&=
\frac{
\E\!\left[
\{m(X)-m(x)\}K_{h_n,x}(X)
\right]
}{D_{h_n}(x)}.
\label{eq:smoothing-bias-identity}
\end{align}
Because $\operatorname{supp}(K)\subseteq[0,1]$, the condition
$K_{h_n,x}(X)\ne0$ implies
\[
\|X-x\|_{\mathcal B}\le h_n\le\delta.
\]
On this event, the uniform H\"older condition gives
\[
|m(X)-m(x)|
\le
L_m\|X-x\|_{\mathcal B}^{\alpha}
\le L_mh_n^\alpha.
\]
Since $K\ge0$, equation \eqref{eq:smoothing-bias-identity} therefore
implies
\begin{align*}
|m_{h_n}(x)-m(x)|
&\le
\frac{
\E\!\left[
|m(X)-m(x)|K_{h_n,x}(X)
\right]
}{D_{h_n}(x)}\\
&\le
L_mh_n^\alpha
\frac{\E\{K_{h_n,x}(X)\}}{D_{h_n}(x)}
=L_mh_n^\alpha.
\end{align*}
The constants do not depend on $x$, so
\begin{equation}
\sup_{x\in\mathcal X}|m_{h_n}(x)-m(x)|
\le L_mh_n^\alpha.
\label{eq:uniform-smoothing-bias}
\end{equation}

For every $x\in\mathcal X$, the triangle inequality now gives
\[
|m^L_{\lambda_n,h_n}(x)-m(x)|
\le
|m^L_{\lambda_n,h_n}(x)-m_{h_n}(x)|
+|m_{h_n}(x)-m(x)|.
\]
Taking the supremum over $x$ and using the definition of $\rho_n$ and
\eqref{eq:uniform-smoothing-bias}, we obtain
\begin{align*}
\sup_{x\in\mathcal X}
|m^L_{\lambda_n,h_n}(x)-m(x)|
&\le
\sup_{x\in\mathcal X}
|m^L_{\lambda_n,h_n}(x)-m_{h_n}(x)|\\
&\quad+
\sup_{x\in\mathcal X}|m_{h_n}(x)-m(x)|\\
&\le \rho_n+L_mh_n^\alpha,
\end{align*}
which proves \eqref{eq:uniform-population-bias}.

\medskip
\noindent The triangle inequality and \eqref{eq:uniform-population-bias} imply
\begin{align}
T_n
&:=
\sup_{x\in\mathcal X}
|\widehat m_{\lambda_n,h_n}(x)-m(x)|
\nonumber\\
&\le
S_n+
\sup_{x\in\mathcal X}
|m^L_{\lambda_n,h_n}(x)-m(x)|
\nonumber\\
&\le S_n+\rho_n+L_mh_n^\alpha.
\label{eq:final-triangle-bound}
\end{align}
Let
\[
q_n
:=
\rho_n+h_n^\alpha+\frac{a_{1n}}{d_n},
\qquad
C_m:=\max\{1,L_m\}.
\]
By \eqref{eq:uniform-stochastic-ratio-rate}, for every
$\varepsilon>0$ there are $M_\varepsilon<\infty$ and
$n_\varepsilon$ such that
\[
\mathbb{P}\left\{
S_n>M_\varepsilon\frac{a_{1n}}{d_n}
\right\}
\le\varepsilon,
\qquad n\ge n_\varepsilon.
\]
On the complementary event, \eqref{eq:final-triangle-bound} yields
\[
T_n
\le
M_\varepsilon\frac{a_{1n}}{d_n}
+\rho_n+L_mh_n^\alpha
\le
(M_\varepsilon+C_m)q_n.
\]
This is precisely $T_n=O_{\mathbb{P}}(q_n)$, proving
\eqref{eq:uniform-total-rate}.  If $q_n\to0$, the same display gives
$T_n\overset{\mathbb P}{\longrightarrow}0$, which is uniform consistency.
\end{proof}

\begin{proof}[Proof of Proposition~\ref{prop:dense-range}]
To prove this, we first note that the operator $A$ is a contraction and its adjoint operator is given by $(A^*g)(W)=\mathbb{E}\{g(X) \mid W\}.$ It is sufficient to prove that $\operatorname{ker}(A^*)=\{0\}.$ Let $g \in L^2(P_X)$ satisfy $A^*g=0$. Then for every $\ell \in \mathcal{B}^*$, using the decomposition $W=X+U$ we get
\[0=(A^*g)(W)=\mathbb{E}\!\left[(A^*g)(W)e^{i\ell(W)}\right]=\mathbb{E}\left[g(X)e^{i\ell(W)}\right]=\mathbb{E}\left[g(X)e^{i\ell(X)}\right]\phi_{U}(\ell).\]
Note that since $\mathbb{E}\!\left[g(X)e^{i\ell(W)}\right]= \mathbb{E}\!\left[g(X)e^{i\ell(X)}e^{i\ell(U)}\right],$ and $U$ and $X$ are independent, so $g(X)e^{i\ell(X)}$ and $e^{i\ell(U)}$ are also independent. Therefore, $\mathbb{E}\left[g(X)e^{i\ell(X)}\right]\phi_{U}(\ell)=0$. Since we assume that $\phi_U(\ell) \ne 0$ for every $\ell \in \mathcal{B}^*$, hence $\mathbb{E}\left[g(X)e^{i\ell(X)}\right]=0$ for every $\ell \in \mathcal{B}^*$. Define the finite signed measure $\mu_g(C)=\int_C g\,d P_X.$ Then, its characteristic functional is
\[
\widehat{\mu}_g(\ell) = \int_{\mathcal{B}}e^{i\ell(z)}\,\mu_g(dz) = \mathbb{E}\!\left[g(X)e^{i\ell(X)}\right] = 0, \qquad \ell\in\mathcal{B}^*.
\]
By uniqueness of characteristic functionals of finite signed Radon measures on a separable Banach space, $\mu_g=0$. Hence $\int_C g\,d P_X=0$ for every Borel set $C$, which implies $g=0$ $P_X$-almost surely. Use the identity $\overline{\operatorname{Ran}(A)}=\operatorname{ker}(A^*)^{\perp}$ to finish the proof. 
\end{proof}

\begin{proof}[Proof of Corollary \ref{cor:tikhonov}]
Let $T=AA^*$. Under the source condition $K_{h_n,x}=T^\nu g_{h_n,x}$,
\[
R_{\lambda_n,h_n,x} = -\lambda_n(T+\lambda_n I)^{-1} T^\nu g_{h_n,x}.
\]
By the spectral theorem,
\[
\|R_{\lambda_n,h_n,x}\|_{L^2(P_X)} \le C_\nu\lambda_n^\nu \|g_{h_n,x}\|_{L^2(P_X)},
\]
where
\[
C_\nu = \sup_{t\ge0}\frac{t^\nu}{1+t} <\infty.
\]
Since $|m(X)|\le M_m$ almost surely,
\[
|r_{0,\lambda_n,h_n}(x)| + |r_{1,\lambda_n,h_n}(x)| \le (1+M_m)C_\nu\lambda_n^\nu \|g_{h_n,x}\|_{L^2(P_X)}.
\]
The first rate condition therefore implies Assumption~\ref{cond:residual}.

Next,
\[
L_{\lambda_n,h_n,x} = A^*(T+\lambda_n I)^{-1}K_{h_n,x}.
\]
Again by the spectral theorem,
\[
\|L_{\lambda_n,h_n,x}\|_{L^2(P_W)} \le \frac{1}{2\sqrt{\lambda_n}} \|K_{h_n,x}\|_{L^2(P_X)}.
\]
Because $K\ge0$ and is bounded,
\[
\|K_{h_n,x}\|_{L^2(P_X)}^2 = \mathbb{E} K_{h_n,x}^2(X) \le \|K\|_\infty \mathbb{E} K_{h_n,x}(X) = \|K\|_\infty D_{h_n}(x).
\]
Therefore,
\[
\|L_{\lambda_n,h_n,x}\|_{L^2(P_W)} \le \frac{\sqrt{\|K\|_\infty D_{h_n}(x)}}{2\sqrt{\lambda_n}}.
\]
Under the conditional second-moment assumption on $Y$,
\[
s_{0,\lambda_n,h_n}(x) + s_{1,\lambda_n,h_n}(x) \le C \|L_{\lambda_n,h_n,x}\|_{L^2(P_W)}
\]
for a constant $C<\infty$. Hence, with $C':=C\sqrt{\|K\|_\infty}/2$,
\[
\frac{s_{0,\lambda_n,h_n}(x) + s_{1,\lambda_n,h_n}(x)}{\sqrt{n}\,D_{h_n}(x)} \le \frac{C'}{\sqrt{n\lambda_nD_{h_n}(x)}} \longrightarrow0.
\]
Thus Assumption~\ref{cond:L} also holds, and the conclusion follows from Theorem~\ref{thm:main}.
\end{proof}

\begin{proof}[Proof of Theorem \ref{thm:feasible-uniform-confidence-band}]
We divide the proof into steps.

\medskip
\noindent Let $\Delta_{0,n} := \sup_{x\in\mathcal X} |\widehat D_n^L(x)-D_n^L(x)|.$
The proof of Theorem~\ref{thm:uniform-bias} gives $\Delta_{0,n}=O_{\mathbb P}(a_{0n}).$
Since $d_n=\inf_{x\in\mathcal X}D_n^L(x)$ and Condition~\ref{unif:big} gives $a_{0n}/d_n\to0$, and hence
\[
\sup_{x\in\mathcal X}
\left|
\frac{\widehat D_n^L(x)}{D_n^L(x)}-1
\right|
\le
\frac{\Delta_{0,n}}{d_n}
=o_{\mathbb P}(1).
\]
It follows that
$\mathbb P \left\{\inf_{x\in\mathcal X} \widehat D_n^L(x)\ge\frac{d_n}{2}\right\}
\longrightarrow1.$
Condition~\ref{unif:big} and
$\inf_{x\in\mathcal X}s_n(x)>0$ imply
$\mathbb P \left\{ \inf_{x\in\mathcal X}\widehat s_n(x)>0
\right\} \longrightarrow1.$

\medskip

\noindent Define
\begin{equation}
\widehat{\mathbb Z}_n(x)
:=
\frac{\sqrt n\,\widehat D_n^L(x)}
{\widehat s_n(x)}
\{\widehat m_n(x)-m(x)\}.
\label{eq:feasible-studentized-sampling-process}
\end{equation}
Since
$\mathbb Pf_{n,x}=0$, we have
\begin{equation*}
\widehat m_n(x)-m_n^L(x)
=
\frac{(\mathbb P_n-\mathbb P)f_{n,x}}
{\widehat D_n^L(x)}, \quad \text{whenever}\; \widehat D_n^L(x)\neq0.
\label{eq:exact-uniform-ratio-identity}
\end{equation*}
 Therefore,
\begin{align}
\frac{\sqrt n\,\widehat D_n^L(x)}
{\widehat s_n(x)}
\{\widehat m_n(x)-m_n^L(x)\}
&=
\frac{\mathbb G_nf_{n,x}}{\widehat s_n(x)}
\notag\\
&=
\frac{s_n(x)}{\widehat s_n(x)}
\mathbb G_n\psi_{n,x}.
\label{eq:exact-feasible-studentized-identity}
\end{align}

\noindent Condition \ref{unif:gaussian-coupling} and applying Markov's inequality, we get 
$\left\|
\mathbb G_n\psi_{n,\cdot}
\right\|_{\mathcal X}
=
O_{\mathbb P}(\mathfrak m_n).$
Condition~\ref{unif:big}\textup{(a)} implies
$\sup_{x\in\mathcal X}
\left|
\frac{s_n(x)}{\widehat s_n(x)}-1
\right|
=
o_{\mathbb P}(\mathfrak m_n^{-2}).$ and thus,
\begin{equation}
\left\|
\frac{\sqrt n\,\widehat D_n^L(\cdot)}
{\widehat s_n(\cdot)}
\{\widehat m_n(\cdot)-m_n^L(\cdot)\}
-
\mathbb G_n\psi_{n,\cdot}
\right\|_{\mathcal X}
=
o_{\mathbb P}(\mathfrak m_n^{-1}).
\label{eq:feasible-studentization-remainder}
\end{equation}

\noindent Next, we define the standardized population bias
\[
b_n(x)
:=
\frac{\sqrt n\,D_n^L(x)}{s_n(x)}
\{m_n^L(x)-m(x)\}.
\]
The bias component of \eqref{eq:feasible-studentized-sampling-process}
equals
\[
\frac{\sqrt n\,\widehat D_n^L(x)}
{\widehat s_n(x)}
\{m_n^L(x)-m(x)\}
=
\frac{\widehat D_n^L(x)}{D_n^L(x)}
\frac{s_n(x)}{\widehat s_n(x)}
b_n(x).
\]
By Step 1, Condition~\ref{unif:big}\textup{(a)}, and Condition
\ref{unif:undersmoothing},
\[
\sup_{x\in\mathcal X}
\left|
\frac{\sqrt n\,\widehat D_n^L(x)}
{\widehat s_n(x)}
\{m_n^L(x)-m(x)\}
\right|
=
o_{\mathbb P}(\mathfrak m_n^{-1}).
\]
Combining this result with
\eqref{eq:feasible-studentization-remainder} and
\ref{eq:uniform-Gaussian-coupling} gives
\begin{equation}\label{eq:feasible-Gaussian-coupling}
\left|
\left\|
\widehat{\mathbb Z}_n
\right\|_{\mathcal X}
-
\left\|
\mathbb Z_n^G
\right\|_{\mathcal X}
\right|
=
o_{\mathbb P}(\mathfrak m_n^{-1}).
\end{equation}

\noindent For every $x\in\mathcal X$, $\operatorname{Var}\{\mathbb Z_n^G(x)\}
=
\mathbb P\psi_{n,x}^2
=1,$
and hence
\begin{equation}
\sup_{t\in\mathbb R}
\mathbb P
\left\{
\left|
\|\mathbb Z_n^G\|_{\mathcal X}-t
\right|
\le\varepsilon
\right\}
\le
4\varepsilon\mathfrak m_n,
\qquad \varepsilon>0.
\label{eq:uniform-Gaussian-anti-concentration}
\end{equation}
Combining \eqref{eq:feasible-Gaussian-coupling} and
\eqref{eq:uniform-Gaussian-anti-concentration} yields
\begin{equation}
\sup_{t\in\mathbb R}
\left|
\mathbb P
\left\{
\|\widehat{\mathbb Z}_n\|_{\mathcal X}\le t
\right\}
-
\mathbb P
\left\{
\|\mathbb Z_n^G\|_{\mathcal X}\le t
\right\}
\right|
\longrightarrow0.
\label{eq:feasible-sampling-Gaussian-approximation}
\end{equation}

\medskip
\noindent On the other hand, condition~\ref{unif:big}\textup{(b)} gives
\[
\sup_{t\in\mathbb R}
\left|
\mathbb P_\xi
\left\{
\|\widehat{\mathbb Z}_n^\xi\|_{\mathcal X}\le t
\,\middle|\,
\mathcal D_n
\right\}
-
\mathbb P
\left\{
\|\mathbb Z_n^G\|_{\mathcal X}\le t
\right\}
\right|
\overset{\mathbb P}{\longrightarrow}0.
\]
Thus the conditional multiplier distribution and the sampling
distribution in \eqref{eq:feasible-sampling-Gaussian-approximation}
both approximate the same Gaussian distribution.

\medskip
\noindent To finish the proof, let
$F_n^G(t)
:=
\mathbb P
\left\{
\|\mathbb Z_n^G\|_{\mathcal X}\le t
\right\}.$ The equation
\eqref{eq:uniform-Gaussian-anti-concentration} implies that $F_n^G$ is
continuous. Hence, defining
\[
c_n^G(p)
:=
\inf\{t:F_n^G(t)\ge p\},
\qquad p\in(0,1),
\]
we have
\[
F_n^G\{c_n^G(p)\}=p.
\]

\noindent For every fixed $\eta\in(0,\tau\wedge(1-\tau))$, Condition~\ref{unif:big}\textup{(b)} implies, with probability
approaching one,
\[
c_n^G(1-\tau-\eta)
\le
\widehat c_n(1-\tau)
\le
c_n^G(1-\tau+\eta).
\]
Using \eqref{eq:feasible-sampling-Gaussian-approximation}, it follows
that
\[
1-\tau-\eta+o(1)
\le
\mathbb P
\left\{
\|\widehat{\mathbb Z}_n\|_{\mathcal X}
\le
\widehat c_n(1-\tau)
\right\}
\le
1-\tau+\eta+o(1).
\]
Letting $\eta\downarrow0$ gives
\begin{equation}\label{eq:bootstrap-quantile-coverage}
\mathbb P
\left\{
\|\widehat{\mathbb Z}_n\|_{\mathcal X}
\le
\widehat c_n(1-\tau)
\right\}
=
1-\tau+o(1).
\end{equation}

\noindent On the event where the feasible normalizers are positive,
$\left\{
\|\widehat{\mathbb Z}_n\|_{\mathcal X}
\le
\widehat c_n(1-\tau)
\right\}$
is exactly the event
\[
\left\{
m(x)\in
\widehat{\mathcal C}_{n,1-\tau}(x)
\text{ for every }x\in\mathcal X
\right\}.
\]
The complement of this positivity event has probability tending to
zero by Step 1. Equation \eqref{eq:feasible-uniform-coverage} therefore
follows from \eqref{eq:bootstrap-quantile-coverage}.
\end{proof}

\newpage
\bibliographystyle{unsrtnat} 
\bibliography{main}

\end{document}